\newcommand{\email}[1]{\href{mailto:#1}{\nolinkurl{#1}}}
\newlength{\mySubFigSize}
\definecolor{labelkey}{rgb}{0,0.08,0.45}
\definecolor{refkey}{rgb}{0,0.6,0.0}
\definecolor{Brown}{rgb}{0.45,0.0,0.05}
\definecolor{dgreen}{rgb}{0.00,0.49,0.00}
\definecolor{dblue}{rgb}{0,0.08,0.75}
\renewcommand{\leq}{\ensuremath{\leqslant}}
\renewcommand{\geq}{\ensuremath{\geqslant}}
\renewcommand{\geq}{\ensuremath{\geqslant}}
\newcommand{\Frac}[2]{\displaystyle{\frac{#1}{#2}}} 
\newcommand{\scal}[2]{{\langle{{#1}\mid{#2}}\rangle}}
\newcommand{\menge}[2]{\big\{{#1}~\big |~{#2}\big\}}
\newcommand{\HH}{\ensuremath{{\mathcal H}}}
\newcommand{\GG}{\ensuremath{{\mathcal G}}}
\newcommand{\Sum}{\ensuremath{\displaystyle\sum}}
\newcommand{\emp}{\ensuremath{{\varnothing}}}
\newcommand{\Id}{\ensuremath{\operatorname{Id}}\,}
\newcommand{\RR}{\ensuremath{\mathbb{R}}}
\newcommand{\RP}{\ensuremath{\left[0,+\infty\right[}}
\newcommand{\RPP}{\ensuremath{\left]0,+\infty\right[}}
\newcommand{\spts}{\ensuremath{\text{\rm{spts}\,}}}
\newcommand{\barc}{\ensuremath{\text{\rm{bar}\,}}}
\newcommand{\RPX}{\ensuremath{\left[0,+\infty\right]}}
\newcommand{\RX}{\ensuremath{\left]-\infty,+\infty\right]}}
\newcommand{\EE}{\ensuremath{\mathsf E}}
\newcommand{\PP}{\ensuremath{\mathsf P}}
\newcommand{\NN}{\ensuremath{\mathbb N}}
\newcommand{\intdom}{\ensuremath{\text{int\,dom}\,}}
\newcommand{\ran}{\ensuremath{\text{\rm ran}\,}}
\newcommand{\pinf}{\ensuremath{{+\infty}}}
\newcommand{\minf}{\ensuremath{{-\infty}}}
\newcommand{\epi}{\ensuremath{\text{\rm epi}\,}}
\newcommand{\dom}{\ensuremath{\text{\rm dom}\,}}
\newcommand{\env}{\ensuremath{\text{\rm env}\,}}
\newcommand{\rec}{\ensuremath{\text{\rm rec}\,}}
\newcommand{\cone}{\ensuremath{\text{\rm cone}\,}}
\newcommand{\infconv}{\ensuremath{\mbox{\small$\,\square\,$}}}
\newtheorem{theorem}{Theorem}[section]
\newtheorem{lemma}[theorem]{Lemma}
\newtheorem{corollary}[theorem]{Corollary}
\newtheorem{proposition}[theorem]{Proposition}
\theoremstyle{plain}{\theorembodyfont{\rmfamily}%
}
\theoremstyle{plain}{\theorembodyfont{\rmfamily}%
\newtheorem{example}[theorem]{Example}}
\theoremstyle{plain}{\theorembodyfont{\rmfamily}%
\newtheorem{remark}[theorem]{Remark}}
\theoremstyle{plain}{\theorembodyfont{\rmfamily}%
}
\theoremstyle{plain}{\theorembodyfont{\rmfamily}%
}
\theoremstyle{plain}{\theorembodyfont{\rmfamily}%
}
\theoremstyle{plain}{\theorembodyfont{\rmfamily}%
\newtheorem{definition}[theorem]{Definition}}
\theoremstyle{plain}{\theorembodyfont{\rmfamily}%
}
\numberwithin{equation}{section}
\begin{document}

\title{\sffamily\LARGE Perspective Functions: Properties,
Constructions, and Examples\footnote{Contact author: 
P. L. Combettes, \email{plc@math.ncsu.edu}, 
phone: +1 (919) 515 2671.}}

\author{Patrick L. Combettes\\
\small
\small North Carolina State University\\
\small Department of Mathematics\\
\small Raleigh, NC 27695-8205, USA\\
\small \email{plc@math.ncsu.edu}\\
}

\date{\sffamily ~}

\maketitle

\vskip 8mm

\begin{abstract}
Many functions encountered in applied mathematics and in
statistical data analysis can be expressed in terms of 
perspective functions. One of the earliest examples is the 
Fisher information, which appeared in statistics in 
the 1920s. We analyze various algebraic and 
convex-analytical properties of perspective functions and 
provide general schemes to construct lower semicontinuous convex
functions from them. Several new examples are presented and 
existing instances are featured as special cases.
\end{abstract}

\section{Introduction}
Let $\GG$ be a real Hilbert space and let 
$\varphi\colon\GG\to\RX$ be a convex function.
The perspective function of $\varphi$ is (see Figure~\ref{fig:24})
\begin{equation}
\label{ekJhy64989d08}
\mathscr{P}_{\varphi}\colon\RR\times\GG\to\RX\colon 
(\eta,y)\mapsto
\begin{cases}
\eta\varphi(y/\eta),&\text{if}\;\:\eta>0;\\
\pinf,&\text{otherwise.}
\end{cases}
\end{equation}
The properties of $\mathscr{P}_{\varphi}$ were first investigated 
in \cite{Rock70}, where it was shown in particular that
$\mathscr{P}_{\varphi}$ is convex if and only if $\varphi$ is
convex (see also \cite{Livre1,Daco08,Hiri93}). The term ``perspective
function'' was coined by Claude Lemar\'echal ca.\ 1987-1988
\cite{Lema17} and first appeared in print in 
\cite[Section~IV.2.2]{Hiri93}.
Special cases of the construction \eqref{ekJhy64989d08} arise in 
various areas of applied mathematics and data analysis.
One of the oldest instances involving perspective functions is 
the Fisher information of a differentiable probability density 
$x\colon\RR^N\to\RPP$, that is,
\begin{equation}
\label{e:fisher}
\int_{\RR^N}\frac{\|\nabla x(t)\|_2^2}{x(t)}dt,
\end{equation}
where $\|\cdot\|_2$ is the standard Euclidean norm on $\RR^N$.
This notion, which dates back to the work of Fisher in statistics
\cite{Fish25}, has found applications in many contexts,
e.g., \cite{Berc13,Borw95,Borw96,Frie07,Noll98,Tosc15,Vill98}. 
More generally, \eqref{ekJhy64989d08} 
can be used to construct convex integrands of integral
functionals such as  
\begin{equation}
\label{ekJhy64989d21b}
\int_{\RR^N}\mathscr{P}_{\varphi}\big(x(t),y(t)\big)dt=
\int_{\RR^N}x(t)\varphi\bigg(\frac{y(t)}{x(t)}\bigg)dt,
\end{equation}
where $x\colon\RR^N\to\RPP$ and $y\colon\RR^N\to\GG$.
In the case when $N=1$ and $\GG=\RR$, it corresponds to a notion
of $\varphi$-divergence which originates in
\cite{Alis66,Csis67} and that has been used extensively in 
information theory, statistics, signal processing, and pattern 
recognition \cite{Bass89,Berl11,Lies06,Pard06}; see also 
\cite{Tebo91,Hiri07} for a 
discussion of discrete counterparts.
In the case when $\GG=\RR^{N\times N}$, $y=\nabla x$, and 
$\varphi=\|\cdot\|_2^2$, one recovers \eqref{e:fisher}.
Furthermore, choosing $\varphi=\|\cdot\|_2^p$ with 
$p\in\left]1,\pinf\right[$ provides the extension of the 
Fisher information \eqref{e:fisher} found in \cite{Boek77} in 
the case when $N=1$.
Instances of perspective functions can also be identified in 
robust estimation \cite[Section~7.7]{Hube09} (see also 
\cite{Ndia16,Owen07} for recent developments), 
transportation theory \cite{Bena00,Bras11,Fits16,Papa14}, sparse
regression \cite{Bien2016,Scda16b,Lederer2015}, control theory
\cite{Jung13,Moeh15}, mixed-integer programming \cite{Hija12},
computer vision \cite{Zach10}, disjunctive programming \cite{Ceri99},
game theory \cite{Akia16}, machine learning \cite{Micc13},
and mean-field games \cite{Bric16}.

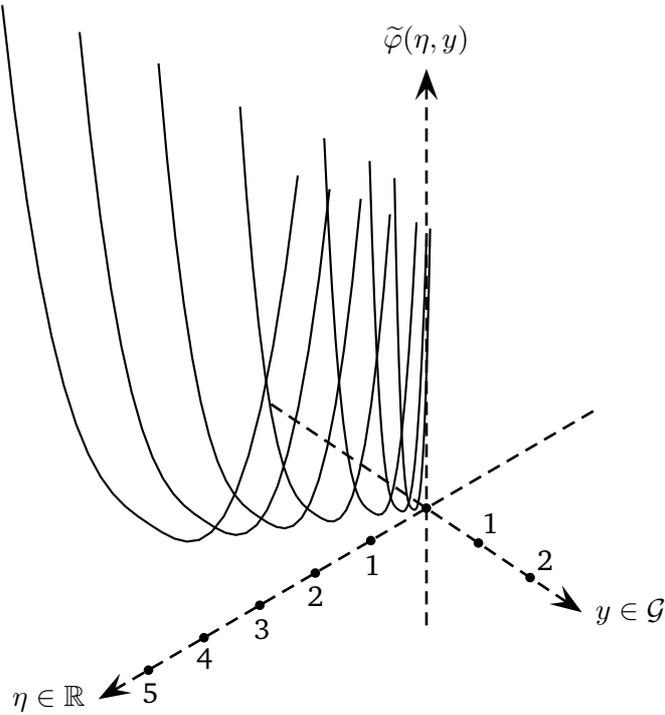
\begin{figure}
\begin{center}
\begin{pspicture}(-5,-2.6)(5,5.0)
\psset{Alpha=43,Beta=39}
\pstThreeDCoor[linestyle=dashed,nameX=$\eta\in\RR$,nameY=$y\in\GG$,
nameZ={$\widetilde{\varphi}(\eta,y)$},linecolor=black,%
arrowsize=0.30cm,linewidth=1.0pt,xMin=-3.0,xMax=5.9,%
yMin=-3,yMax=3,zMin=-2,zMax=7.5]
\parametricplotThreeD[algebraic,linecolor=black](-0.349,0.339)%
{0.25|t|0.125+abs(2*t)^3/.0625}
\parametricplotThreeD[algebraic,linecolor=black](-0.56,0.54)%
{0.5|t|0.25+abs(2*t)^3/.25}
\parametricplotThreeD[algebraic,linecolor=black](-0.9,0.88)%
{1|t|0.5+abs(2*t)^3/1}
\parametricplotThreeD[algebraic,linecolor=black](-1.45,1.44)%
{2|t|1.0+abs(2*t)^3/4}
\parametricplotThreeD[algebraic,linecolor=black](-1.95,1.95)%
{3|t|1.5+abs(2*t)^3/9}
\parametricplotThreeD[algebraic,linecolor=black](-2.4,2.42)%
{4|t|2.0+abs(2*t)^3/16}
\parametricplotThreeD[algebraic,linecolor=black](-2.82,2.88)%
{5|t|2.5+abs(2*t)^3/25}
\pstThreeDDot[dotscale=1,linecolor=black](1.0,0,0)
\pstThreeDDot[dotscale=1,linecolor=black](2.0,0,0)
\pstThreeDDot[dotscale=1,linecolor=black](3.0,0,0)
\pstThreeDDot[dotscale=1,linecolor=black](4.0,0,0)
\pstThreeDDot[dotscale=1,linecolor=black](5.0,0,0)
\pstThreeDDot[dotscale=1,linecolor=black](0,2,0)
\pstThreeDDot[dotscale=1,linecolor=black](0,1,0)
\pstThreeDDot[dotscale=1,linecolor=black](0,0,0)
\rput(-0.71,-0.76){1}
\rput(-1.46,-1.17){2}
\rput(-2.16,-1.6){3}
\rput(-2.92,-2.0){4}
\rput(-3.63,-2.45){5}
\rput(0.84,-0.2){1}
\rput(1.56,-0.69){2}
\end{pspicture}
\end{center}
\caption{Slices of the graph of the perspective function of 
$\varphi\colon y\mapsto 1/2+8\|y\|^3$ for fixed values of 
$\eta\in\{1/4,1/2,1,2,3,4,5\}$; the value
$\eta=1$ provides the graph of $\varphi$.}
\label{fig:24}
\end{figure}

Although perspective functions appear explicitly or implicitly in 
an increasing number of
diverse research areas, little effort has been dedicated to the
systematic study of their properties, especially in general
Hilbert spaces. It is the goal of the present paper to propose such
an investigation, with a special focus on the construction of lower
semicontinuous convex functions around perspective functions.
As is well known, these two properties are of paramount importance
in the modeling, analysis, and numerical solution of variational
problems. Section~\ref{sec:2} focuses on algebraic and 
convex-analytical properties. On the basis of these results,
several examples of lower semicontinuous convex perspective 
functions are provided in Section~\ref{sec:3}. 
Finally, integral functions with
perspective function-based integrands are studied in 
Section~\ref{sec:5}. 
Many of the functions we propose are
new and suggest new problem formulations in various applications
areas. In particular, our results are exploited in the 
companion paper \cite{Scda16b}, which investigates the proximity 
operator of perspective functions and explores new models and
algorithms in high-dimensional statistics. 

{\bfseries Notation.} 
Throughout, $\HH$ and $\GG$ are real Hilbert spaces and  
$\HH\oplus\GG$ denotes their Hilbert direct sum.
The closed ball with center $x\in\HH$ and radius $\rho\in\RPP$ in
$\HH$ is denoted by $B(x;\rho)$. 
$\Gamma_0(\HH)$ is the class of lower 
semicontinuous convex functions $f\colon\HH\to\RX$ such that 
$\dom f=\menge{x\in\HH}{f(x)<\pinf}\neq\emp$. Let
$f\in\Gamma_0(\HH)$. Then $f^*$ denotes the conjugate of $f$, 
$\epi f$ the epigraph of $f$, 
$\rec f$ the recession function of $f$, and $\partial f$ the
subdifferential of $f$.
Let $C$ be a subset of $\HH$. Then $\iota_C$ is the
indicator function of $C$, $d_C$ the distance function to $C$,
$\rec C$ the recession cone of $C$, and $\sigma_C$ the 
support function of $C$. See \cite{Livre1,Laur72} for background 
on hilbertian convex analysis and \cite{Hiri93,Rock70} for 
the Euclidean setting. 

\section{Properties of perspective functions}
\label{sec:2}

In this section we study various properties of perspective
functions. We start our discussion by noting that, if 
$\varphi\in\Gamma_0(\GG)$, the construction 
\eqref{ekJhy64989d08} does not necessarily produce a 
lower semicontinuous function. For this
reason, we shall use the following variant, first proposed in
\cite{Rock70} for $\GG=\RR^N$.

\begin{definition}
\label{d:perspective}
Let $\varphi\in\Gamma_0(\GG)$ and let $\rec\varphi$ be its 
recession function, i.e., given any $z\in\dom\varphi$,
\begin{equation}
\label{ebcBdq739jU9-23a}
(\forall y\in\GG)\quad(\rec\varphi)(y)=\sup_{x\in\dom
\varphi}\big(\varphi(x+y)-\varphi(y)\big)=\lim_{\alpha\to\pinf}
\frac{\varphi(z+\alpha y)}{\alpha}.
\end{equation}
The lower semicontinuous envelope of the perspective of $\varphi$ 
is
\begin{equation}
\label{ekJhy64989d08l}
\widetilde{\varphi}\colon\RR\times\GG\to\RX\colon 
(\eta,y)\mapsto
\begin{cases}
\eta\varphi(y/\eta),&\text{if}\;\:\eta>0;\\
(\rec\varphi)(y),&\text{if}\;\:\eta=0;\\
\pinf,&\text{otherwise.}
\end{cases}
\end{equation}
For simplicity, $\widetilde\varphi$ is called the
\emph{perspective} of $\varphi$.
\end{definition}

\begin{lemma}
\label{l:pjlaurent}
Let $\varphi\in\Gamma_0(\GG)$. Then the following hold:
\begin{enumerate}
\item
\label{l:pjlaurenti}
$\rec\epi\varphi=\epi\rec\varphi$ 
{\rm\cite[Proposition~6.8.3]{Laur72}}.
\item
\label{l:pjlaurentii}
$\rec\varphi=\sigma_{\dom\varphi^*}$
{\rm\cite[Th\'eor\`eme~6.8.5]{Laur72}}.
\end{enumerate}
\end{lemma}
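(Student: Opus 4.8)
Both assertions are classical facts of convex analysis, so the plan is to reconstruct the standard arguments in the present Hilbertian setting, relying only on \eqref{ebcBdq739jU9-23a} and on the fact that $\varphi\in\Gamma_0(\GG)$ forces $\varphi^*\in\Gamma_0(\GG)$, hence $\dom\varphi^*\neq\emp$ and $\varphi=\varphi^{**}$. Since both identities are recorded in \cite{Laur72}, one may of course simply cite them, but the reconstruction is short.

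For \ref{l:pjlaurenti} I would establish the two inclusions separately, using that $\epi\varphi$ is a nonempty closed convex subset of $\GG\oplus\RR$, so that $(y,\mu)\in\rec\epi\varphi$ if and only if $\epi\varphi+\RP(y,\mu)\subseteq\epi\varphi$. For $\rec\epi\varphi\subseteq\epi\rec\varphi$: given $(y,\mu)\in\rec\epi\varphi$ and a fixed $z\in\dom\varphi$, one has $(z+\alpha y,\varphi(z)+\alpha\mu)\in\epi\varphi$ for every $\alpha\in\RPP$, i.e.\ $\varphi(z+\alpha y)\le\varphi(z)+\alpha\mu$; dividing by $\alpha$ and letting $\alpha\to\pinf$ via the limit expression in \eqref{ebcBdq739jU9-23a} gives $(\rec\varphi)(y)\le\mu$, that is, $(y,\mu)\in\epi\rec\varphi$. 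For the reverse inclusion: if $(\rec\varphi)(y)\le\mu$ then, since $\rec\varphi$ is positively homogeneous (again by \eqref{ebcBdq739jU9-23a}, $(\rec\varphi)(\alpha y)=\alpha(\rec\varphi)(y)$ for $\alpha\in\RPP$ and $(\rec\varphi)(0)=0$), the supremum expression in \eqref{ebcBdq739jU9-23a} yields $\varphi(z+\alpha y)\le\varphi(z)+\alpha(\rec\varphi)(y)\le\varphi(z)+\alpha\mu$ for every $z\in\dom\varphi$ and every $\alpha\in\RP$; hence $(z,\zeta)\in\epi\varphi$ implies $(z+\alpha y,\zeta+\alpha\mu)\in\epi\varphi$, i.e.\ $(y,\mu)\in\rec\epi\varphi$.

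For \ref{l:pjlaurentii} I would fix $z\in\dom\varphi$ and use $\varphi=\varphi^{**}$ together with $\varphi^*(u)=\pinf$ off $\dom\varphi^*$ to write, for every $\alpha\in\RPP$,
\begin{equation*}
\frac{\varphi(z+\alpha y)}{\alpha}=\sup_{u\in\dom\varphi^*}\bigg(\scal{u}{y}+\frac{\scal{u}{z}-\varphi^*(u)}{\alpha}\bigg).
\end{equation*}
For each fixed $u\in\dom\varphi^*$ the bracketed term converges to $\scal{u}{y}$ as $\alpha\to\pinf$, so the left-hand side has $\liminf$ no smaller than $\sup_{u\in\dom\varphi^*}\scal{u}{y}=\sigma_{\dom\varphi^*}(y)$; in the other direction, the Fenchel--Young inequality $\scal{u}{z}-\varphi^*(u)\le\varphi(z)$ (valid for every $u$) bounds the right-hand side above by $\sigma_{\dom\varphi^*}(y)+\varphi(z)/\alpha$, so its $\limsup$ is no larger than $\sigma_{\dom\varphi^*}(y)$. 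Since the limit as $\alpha\to\pinf$ exists by \eqref{ebcBdq739jU9-23a} and equals $(\rec\varphi)(y)$, this yields $(\rec\varphi)(y)=\sigma_{\dom\varphi^*}(y)$, the usual conventions disposing of the case $\sigma_{\dom\varphi^*}(y)=\pinf$.

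I expect the only genuinely delicate point to be the interchange of the supremum over $\dom\varphi^*$ with the limit $\alpha\to\pinf$ in \ref{l:pjlaurentii}; the device that resolves it is precisely the uniform upper bound $\scal{u}{z}-\varphi^*(u)\le\varphi(z)<\pinf$ supplied by Fenchel--Young. Part \ref{l:pjlaurenti} is otherwise routine, the only care required being the passage from integer to arbitrary nonnegative multiples of $(y,\mu)$, which is handled by the positive homogeneity of $\rec\varphi$.
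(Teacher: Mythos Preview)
Your reconstruction is correct and follows the standard arguments. Note, however, that the paper does not actually prove this lemma: both items carry explicit citations to \cite{Laur72} in the statement itself, and the lemma is simply recorded for later use without any accompanying proof. So there is no ``paper's own proof'' to compare against---your proposal supplies what the paper deliberately outsources to Laurent.

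A couple of minor remarks on presentation. In \ref{l:pjlaurenti}, your closing comment about the ``passage from integer to arbitrary nonnegative multiples'' is unnecessary: you already took as your working definition that $(y,\mu)\in\rec\epi\varphi$ iff $\epi\varphi+\RP(y,\mu)\subseteq\epi\varphi$, so arbitrary $\alpha\ge 0$ is built in from the start and the integer-to-real issue never arises. Also, be aware that the supremum formula in \eqref{ebcBdq739jU9-23a} contains a typo in the paper (it should read $\varphi(x+y)-\varphi(x)$, not $\varphi(x+y)-\varphi(y)$); you have correctly used the intended expression. In \ref{l:pjlaurentii}, your handling of the sup/limit interchange via the uniform Fenchel--Young bound $\scal{u}{z}-\varphi^*(u)\le\varphi(z)$ is exactly the right device, and the argument goes through cleanly in the Hilbert setting.
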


The following result records basic topological and convex
analytical properties of the perspective function 
\eqref{ekJhy64989d08l}.

\begin{proposition}
\label{pkJhy64989d21}
Let $\varphi\in\Gamma_0(\GG)$. Then the following hold:
\begin{enumerate}
\item
\label{pkJhy64989d21i}
$\widetilde{\varphi}$ is positively homogeneous.
\item
\label{pkJhy64989d21ii}
$\widetilde{\varphi}\in\Gamma_0(\RR\oplus\GG)$. 
\item
\label{pkJhy64989d21ii+}
$\widetilde{\varphi}$ is sublinear.
\item
\label{pkJhy64989d21iii}
Let $C=\menge{(\mu,u)\in\RR\times\GG}{\mu+\varphi^*(u)\leq 0}$. 
Then $(\widetilde{\varphi})^*=\iota_C$ and 
$\widetilde{\varphi}=\sigma_C$.
\item
\label{pkJhy64989d21iii+}
Let $\eta\in\RR$ and $y\in\GG$. Then
\begin{equation}
\label{eIidfwetf548b9-23x}
\partial\widetilde{\varphi}(\eta,y)=
\begin{cases}
\menge{\big(\varphi(y/\eta)-\scal{y}{u}/\eta,u\big)}
{u\in\partial\varphi(y/\eta)},&\text{if}\;\:\eta>0;\\
\menge{(\mu,u)\in C}{\sigma_{\dom\varphi^*}(y)=\scal{y}{u}},
&\text{if}\;\:\eta=0\;\;\text{and}\;\;y\neq 0;\\
C,&\text{if}\;\:\eta=0\;\;\text{and}\;\;y=0;\\
\emp,&\text{if}\;\:\eta<0.
\end{cases}
\end{equation}
\end{enumerate}
\end{proposition}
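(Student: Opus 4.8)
The plan is to establish the five items essentially in the order listed, since each feeds the next. For \ref{pkJhy64989d21i}, positive homogeneity, I would verify directly from \eqref{ekJhy64989d08l} that $\widetilde\varphi(\lambda\eta,\lambda y)=\lambda\widetilde\varphi(\eta,y)$ for $\lambda\in\RPP$: on $\eta>0$ this is the identity $\lambda\eta\,\varphi(\lambda y/(\lambda\eta))=\lambda\big(\eta\varphi(y/\eta)\big)$, on $\eta=0$ it is positive homogeneity of the recession function $\rec\varphi$ (immediate from \eqref{ebcBdq739jU9-23a}), and on $\eta<0$ both sides are $\pinf$. For \ref{pkJhy64989d21ii}, the cleanest route is via epigraphs: one checks that $\epi\widetilde\varphi=\cone\big(\{1\}\times\epi\varphi\big)$ together with its ``limiting directions'' is exactly the closed convex cone generated by $\{1\}\times\epi\varphi$, i.e. $\epi\widetilde\varphi=\overline{\cone}\big(\{1\}\times\epi\varphi\big)$. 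Here Lemma~\ref{l:pjlaurent}\ref{l:pjlaurenti} is the key input: the points with $\eta=0$ in the closure are precisely $\{0\}\times\rec\epi\varphi=\{0\}\times\epi\rec\varphi$, which matches the $\eta=0$ branch of \eqref{ekJhy64989d08l}. Since $\{1\}\times\epi\varphi$ is nonempty (as $\varphi\in\Gamma_0(\GG)$), convex, and the generated cone's closure is a nonempty closed convex set not containing a vertical line through the origin in a degenerate way, $\widetilde\varphi$ is a proper lsc convex function, i.e. $\widetilde\varphi\in\Gamma_0(\RR\oplus\GG)$. Then \ref{pkJhy64989d21ii+} is immediate: sublinear means convex and positively homogeneous, which are \ref{pkJhy64989d21ii} and \ref{pkJhy64989d21i} combined (with properness ensuring the value $-\infty$ is not attained — here one notes $\widetilde\varphi(\eta,y)\geq\scal{y}{u}+\eta\mu$ for any $(\mu,u)$ with $\mu+\varphi^*(u)\leq0$, since $\eta\varphi(y/\eta)\geq\eta\scal{y/\eta}{u}-\eta\varphi^*(u)=\scal{y}{u}+\eta\mu$ when $\eta>0$ and a limiting argument handles $\eta=0$).

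For \ref{pkJhy64989d21iii}, I would compute the conjugate of $\widetilde\varphi$ directly. Since $\widetilde\varphi$ is proper, lsc, convex, and positively homogeneous, it is the support function of a closed convex set, namely $\partial\widetilde\varphi(0,0)$, and dually $(\widetilde\varphi)^*$ is the indicator of that set; so it suffices to identify the set as $C$. Compute
\begin{equation*}
(\widetilde\varphi)^*(\mu,u)=\sup_{\eta\in\RR,\,y\in\GG}\big(\eta\mu+\scal{y}{u}-\widetilde\varphi(\eta,y)\big).
\end{equation*}
Restricting to $\eta>0$ and substituting $y=\eta z$ gives $\sup_{\eta>0,z}\eta\big(\mu+\scal{z}{u}-\varphi(z)\big)=\sup_{\eta>0}\eta\big(\mu+\varphi^*(u)\big)$, which is $0$ if $\mu+\varphi^*(u)\leq0$ and $\pinf$ otherwise; the $\eta=0$ contribution $\sup_y(\scal{y}{u}-(\rec\varphi)(y))=\sup_y(\scal{y}{u}-\sigma_{\dom\varphi^*}(y))$ equals $\iota_{\overline{\dom\varphi^*}}(u)$ by Lemma~\ref{l:pjlaurent}\ref{l:pjlaurentii}, and this is subsumed by the constraint $\mu+\varphi^*(u)\leq0$ (which already forces $u\in\dom\varphi^*$), so no new restriction appears. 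Hence $(\widetilde\varphi)^*=\iota_C$, and biconjugation together with $\widetilde\varphi\in\Gamma_0$ gives $\widetilde\varphi=(\iota_C)^*=\sigma_C$. A small point to check is that $C$ is closed, which follows since $\varphi^*$ is lsc.

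For \ref{pkJhy64989d21iii+}, the subdifferential, I would use $\widetilde\varphi=\sigma_C$ from \ref{pkJhy64989d21iii}, so that $(\mu,u)\in\partial\widetilde\varphi(\eta,y)$ iff $(\mu,u)\in C$ and $\scal{(\eta,y)}{(\mu,u)}=\sigma_C(\eta,y)=\widetilde\varphi(\eta,y)$, i.e. $\eta\mu+\scal{y}{u}=\widetilde\varphi(\eta,y)$ with $\mu+\varphi^*(u)\leq0$. The four cases then come from unwinding this Young-equality condition. When $\eta>0$: $\eta\mu+\scal{y}{u}=\eta\varphi(y/\eta)$ forces $\mu=\varphi(y/\eta)-\scal{y}{u}/\eta$, and then $\mu+\varphi^*(u)\leq0$ reads $\varphi(y/\eta)+\varphi^*(u)\leq\scal{y/\eta}{u}$, which by the Fenchel–Young inequality is equality, i.e. $u\in\partial\varphi(y/\eta)$; conversely such $u$ gives a valid pair. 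When $\eta=0$, $y\neq0$: the condition becomes $\scal{y}{u}=(\rec\varphi)(y)=\sigma_{\dom\varphi^*}(y)$ with $(\mu,u)\in C$, which is exactly the stated set. When $\eta=0$, $y=0$: the equality is $0=0$, so every $(\mu,u)\in C$ works, giving $C$. When $\eta<0$: $\widetilde\varphi(\eta,y)=\pinf$, so $\partial\widetilde\varphi(\eta,y)=\emp$ (equivalently, no finite $\eta\mu+\scal{y}{u}$ can equal $\pinf$). The main obstacle is item \ref{pkJhy64989d21ii}: one must argue carefully that the $\eta=0$ slice of the lsc envelope is genuinely $\rec\varphi$ and not merely $\geq\rec\varphi$, and this is precisely where $\rec\epi\varphi=\epi\rec\varphi$ (Lemma~\ref{l:pjlaurent}\ref{l:pjlaurenti}) does the essential work; everything afterwards is a formal consequence of positive homogeneity and Fenchel duality.
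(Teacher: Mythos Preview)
Your proposal is correct and follows essentially the same route as the paper: the epigraph/closed-cone argument for \ref{pkJhy64989d21ii} via Lemma~\ref{l:pjlaurent}\ref{l:pjlaurenti}, sublinearity from \ref{pkJhy64989d21i}+\ref{pkJhy64989d21ii}, and the Fenchel--Young characterization for \ref{pkJhy64989d21iii+} all match the paper's proof. The only minor difference is in \ref{pkJhy64989d21iii}: the paper cites a known formula for the conjugate of the \emph{non-closed} perspective $\mathscr{P}_\varphi$ and then invokes $(\breve g)^*=g^*$, whereas you compute $(\widetilde\varphi)^*$ directly---your version is more self-contained, the paper's is slightly shorter by outsourcing the computation.
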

\begin{proof}
\ref{pkJhy64989d21i}: This follows from \eqref{ebcBdq739jU9-23a} 
and \eqref{ekJhy64989d08l}.

\ref{pkJhy64989d21ii}: 
Set $D=\{1\}\times\epi\varphi$ and $g=\mathscr{P}_{\varphi}$,
and let $z\in\dom\varphi$. Then $(1,z)\in\dom g$. On the other 
hand, since $D$ is convex, $\epi g=\cone D$ is convex and 
$g$ is therefore a proper convex function. 
Let us denote by $\breve{g}$ the
largest lower semicontinuous convex function majorized by $g$. 
To show that $\widetilde{\varphi}\in\Gamma_0(\RR\oplus\GG)$, 
it is enough to show that 
\begin{equation}
\label{ekJhy64989d22v}
\widetilde{\varphi}=\breve{g}. 
\end{equation}
This can be done using the following argument due to H. H. 
Bauschke. Since $(0,0)\notin D$, it follows from 
\cite[Theorem~9.9 and Corollary~6.52]{Livre1},
Lemma~\ref{l:pjlaurent}\ref{l:pjlaurenti}, and
\cite[Lemma~1.6(ii)]{Livre1} that 
$\epi\breve{g}=\overline{\epi}f=\overline{\text{cone}}\,D=
(\cone D)\cup(\rec D)=
(\epi f)\cup(\{0\}\times\rec\epi\varphi)=
(\epi f)\cup(\{0\}\times\epi\rec\varphi)=
(\epi f)\cup\epi(\iota_{\{0\}}\oplus\rec\varphi)=
\epi\text{min}\{f,\iota_{\{0\}}\oplus\rec\varphi\}=
\epi\widetilde{\varphi}$.

\ref{pkJhy64989d21ii+}: This follows from 
\ref{pkJhy64989d21i} and \ref{pkJhy64989d21ii}.

\ref{pkJhy64989d21iii}: Set $g=\mathscr{P}_{\varphi}$. Then 
$g^*=\iota_C$ \cite[Example~13.8]{Livre1}. Hence, we derive from 
\eqref{ekJhy64989d22v} and \cite[Proposition~13.14]{Livre1} that
$(\widetilde{\varphi})^*=(\breve{g})^*=g^*=\iota_C$. In turn,
\ref{pkJhy64989d21ii} and \cite[Corollary~13.33]{Livre1} yield
$\widetilde{\varphi}=(\widetilde{\varphi})^{**}=\iota_C^*=\sigma_C$.

\ref{pkJhy64989d21iii+}: 
Let $\mu\in\RR$ and $u\in\GG$.
It follows from the Fenchel-Young identity 
\cite[Proposition~16.13]{Livre1} and
\ref{pkJhy64989d21iii} that 
\begin{eqnarray}
\label{eIidfwetf548b9-22}
(\mu,u)\in\partial\widetilde{\varphi}(\eta,y)
&\Leftrightarrow&\widetilde{\varphi}(\eta,y)+
(\widetilde{\varphi})^*(\mu,u)=\eta\mu+\scal{y}{u}\nonumber\\
&\Leftrightarrow&
\widetilde{\varphi}(\eta,y)=\eta\mu+\scal{y}{u}\;\;\text{and}\;\;
\mu+\varphi^*(u)\leq 0.
\end{eqnarray}
We consider three cases.
\begin{itemize}
\item
$\eta<0$:
Then \eqref{ekJhy64989d08l} and \eqref{eIidfwetf548b9-22} yield
$\partial\widetilde{\varphi}(\eta,y)=\emp$. 
\item
$\eta=0$: We deduce from
\eqref{eIidfwetf548b9-22}, \eqref{ekJhy64989d08l}, and 
Lemma~\ref{l:pjlaurent}\ref{l:pjlaurentii} that
\begin{eqnarray}
(\mu,u)\in\partial\widetilde{\varphi}(\eta,y)
&\Leftrightarrow&
(\rec\varphi)(y)=\scal{y}{u}\;\;\text{and}\;\;\mu+\varphi^*(u)\leq 0
\nonumber\\
&\Leftrightarrow&
\sigma_{\dom\varphi^*}(y)=\scal{y}{u}\;\;
\text{and}\;\;(\mu,u)\in C.
\end{eqnarray}
Since $\sigma_{\dom\varphi^*}(0)=0=\scal{0}{u}$, we obtain the
desired results.
\item
$\eta>0$:  Using successively
\eqref{eIidfwetf548b9-22}, \eqref{ekJhy64989d08l}, the
Fenchel-Young inequality \cite[Proposition~13.13]{Livre1},
and the Fenchel-Young identity, we obtain
\begin{eqnarray}
(\mu,u)\in\partial\widetilde{\varphi}(\eta,y)
&\Leftrightarrow&
\mu=\varphi(y/\eta)-\scal{y}{u}/\eta\;\;\text{and}\;\;
\varphi(y/\eta)+\varphi^*(u)\leq\scal{y/\eta}{u}\nonumber\\
&\Leftrightarrow&
\mu=\varphi(y/\eta)-\scal{y}{u}/\eta\;\;\text{and}\;\;
\varphi(y/\eta)+\varphi^*(u)=\scal{y/\eta}{u}\nonumber\\
&\Leftrightarrow&
\mu=\varphi(y/\eta)-\scal{y}{u}/\eta\;\;\text{and}\;\;
u\in\partial\varphi(y/\eta).
\end{eqnarray}
\end{itemize}
We have thus proved \eqref{eIidfwetf548b9-23x}.
\end{proof}

\begin{remark}
\label{rIidfwetf548b9-23}
Some of the results of Proposition~\ref{pkJhy64989d21} have
already been obtained in the case when $\GG=\RR^N$ with different
tools, some of which are specific to the finite-dimensional
setting. Thus, items \ref{pkJhy64989d21ii} and
\ref{pkJhy64989d21iii} can be found in \cite{Rock70}, and 
the case $\eta>0$ of \ref{pkJhy64989d21iii+} appears in 
\cite[Proposition~4]{Ceri99}.
\end{remark}

As shown in \cite{Scda16b}, \eqref{eIidfwetf548b9-23x} is
instrumental in computing the proximity operator of a perspective
function. Here is an important refinement. 

\begin{corollary}
\label{cIidfwetf548b9-23}
Let $\varphi\in\Gamma_0(\GG)$ and denote by $\barc\dom\varphi^*$ 
the barrier cone of $\dom\varphi^*$. Let $\eta\in\RR$, let 
$y\in\GG$, and suppose that one of the following holds:
\begin{enumerate}
\item
\label{cIidfwetf548b9-23i}
$y\notin\barc\dom\varphi^*$.
\item
\label{cIidfwetf548b9-23ii}
$\dom\varphi^*$ is open.
\item
\label{cIidfwetf548b9-23iii}
$\dom\varphi^*=\GG$.
\item
\label{cIidfwetf548b9-23iv}
$\varphi$ is supercoercive: 
$\lim_{\|y\|\to\pinf}\varphi(y)/\|y\|=\pinf$.
\item
\label{cIidfwetf548b9-23v}
For every $v\in\GG$, $\varphi-\scal{\cdot}{v}$ is coercive. 
\end{enumerate}
Then 
\begin{equation}
\label{eIidfwetf548b9-23y}
\partial\widetilde{\varphi}(\eta,y)=
\begin{cases}
\menge{\big(\varphi(y/\eta)-\scal{y}{u}/\eta,u\big)}
{u\in\partial\varphi(y/\eta)},&\text{if}\;\:\eta>0;\\
C,&\text{if}\;\:\eta=0\;\text{and}\;y=0;\\
\emp,&\text{otherwise.}
\end{cases}
\end{equation}
\end{corollary}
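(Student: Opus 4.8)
The plan is to show that under any of the five hypotheses, the second branch of \eqref{eIidfwetf548b9-23x} — the case $\eta = 0$ and $y \neq 0$ — collapses to the empty set, after which \eqref{eIidfwetf548b9-23y} follows immediately by merging the (now empty) second branch into the ``otherwise'' case of Proposition~\ref{pkJhy64989d21}\ref{pkJhy64989d21iii+}. So the whole content is: \emph{if $y \neq 0$ and $\menge{(\mu,u)\in C}{\sigma_{\dom\varphi^*}(y)=\scal{y}{u}}\neq\emp$, then none of \ref{cIidfwetf548b9-23i}--\ref{cIidfwetf548b9-23v} can hold.} Observe first that the projection of $C$ onto its second coordinate is exactly $\dom\varphi^*$: indeed $(\mu,u)\in C \Leftrightarrow \mu \le -\varphi^*(u)$, which is solvable in $\mu$ precisely when $\varphi^*(u) < \pinf$. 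Hence the second branch is nonempty iff there exists $u \in \dom\varphi^*$ with $\sigma_{\dom\varphi^*}(y) = \scal{y}{u}$; by Lemma~\ref{l:pjlaurent}\ref{l:pjlaurentii} this says $(\rec\varphi)(y) = \scal{y}{u}$ and in particular $(\rec\varphi)(y) = \sigma_{\dom\varphi^*}(y) < \pinf$, i.e.\ the supremum defining the support function of $\dom\varphi^*$ in the direction $y$ is \emph{finite and attained}.

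Next I would dispatch the hypotheses in order of increasing work. Hypothesis \ref{cIidfwetf548b9-23i} is immediate: $y \in \barc\dom\varphi^*$ means exactly $\sigma_{\dom\varphi^*}(y) < \pinf$, so its negation forces $\sigma_{\dom\varphi^*}(y) = \pinf$ and the branch is empty. Hypotheses \ref{cIidfwetf548b9-23ii} and \ref{cIidfwetf548b9-23iii} are the heart of the matter: if $\dom\varphi^*$ is open and nonempty and $y \neq 0$, then for any $u \in \dom\varphi^*$ there is $\varepsilon > 0$ with $u + \varepsilon y \in \dom\varphi^*$, whence $\sigma_{\dom\varphi^*}(y) \geq \scal{y}{u+\varepsilon y} = \scal{y}{u} + \varepsilon\|y\|^2 > \scal{y}{u}$; thus the supremum is never attained on an open set in any nonzero direction, so again the branch is empty. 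Case \ref{cIidfwetf548b9-23iii} is the special case $\dom\varphi^* = \GG$, which is open, so it follows from \ref{cIidfwetf548b9-23ii}; alternatively note $\dom\varphi^*=\GG$ directly gives $\sigma_{\GG}(y)=\pinf$ for $y\neq 0$.

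Finally I would handle \ref{cIidfwetf548b9-23iv} and \ref{cIidfwetf548b9-23v} by reducing them to \ref{cIidfwetf548b9-23iii}, using the standard conjugacy dictionary between coercivity-type properties of $\varphi$ and the domain of $\varphi^*$. Supercoercivity of $\varphi$ is equivalent to $\dom\varphi^* = \GG$ (the Moreau--Rockafellar characterization; see e.g.\ \cite[Proposition~14.15 or Theorem~11.8]{Livre1}), so \ref{cIidfwetf548b9-23iv}$\Rightarrow$\ref{cIidfwetf548b9-23iii}. For \ref{cIidfwetf548b9-23v}: the condition that $\varphi - \scal{\cdot}{v}$ is coercive for every $v \in \GG$ means $\inf(\varphi - \scal{\cdot}{v}) > \minf$ is witnessed by a minimizer pattern strong enough that $-\varphi^*(v) = \inf(\varphi - \scal{\cdot}{v}) > \minf$, i.e.\ $\varphi^*(v) < \pinf$, for all $v$; hence $\dom\varphi^* = \GG$ and we are again in case \ref{cIidfwetf548b9-23iii}. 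I would also remark on the implication chain $\ref{cIidfwetf548b9-23iii}\Rightarrow\ref{cIidfwetf548b9-23ii}\Rightarrow\ref{cIidfwetf548b9-23i}$ and $\ref{cIidfwetf548b9-23iv}\Rightarrow\ref{cIidfwetf548b9-23v}\Rightarrow\ref{cIidfwetf548b9-23iii}$ so that in fact \ref{cIidfwetf548b9-23i} is the weakest assumption and the proof really only needs the one-line argument for \ref{cIidfwetf548b9-23i}, the rest being reductions.

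The main obstacle I anticipate is getting the coercivity--domain correspondences in \ref{cIidfwetf548b9-23iv}--\ref{cIidfwetf548b9-23v} stated in exactly the form available in the cited references: ``supercoercive'' versus ``coercive'' correspond respectively to $\dom\varphi^* = \GG$ versus $0 \in \inte\dom\varphi^*$, and one must be careful that \ref{cIidfwetf548b9-23v} (coercivity after subtracting \emph{every} linear functional) is the right strengthening that upgrades $0\in\inte\dom\varphi^*$ to $\dom\varphi^*=\GG$ — that is the only place a genuine lemma (rather than a definition-chase) is invoked, and I would cite it precisely rather than reprove it.
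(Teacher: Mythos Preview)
Your proof is correct and follows essentially the same route as the paper's: reduce to showing the $\eta=0$, $y\neq 0$ branch of \eqref{eIidfwetf548b9-23x} is empty, handle \ref{cIidfwetf548b9-23i} via $\sigma_{\dom\varphi^*}(y)=\pinf$, handle \ref{cIidfwetf548b9-23ii} by observing the support supremum is never attained at a point of an open set (the paper phrases this as $\spts\dom\varphi^*=\emp$, your $\varepsilon$-argument is the unpacked version), and reduce \ref{cIidfwetf548b9-23iii}--\ref{cIidfwetf548b9-23v} to \ref{cIidfwetf548b9-23ii} or \ref{cIidfwetf548b9-23iii} via the coercivity/domain dictionary. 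For \ref{cIidfwetf548b9-23v} the paper invokes the Moreau--Rockafellar characterization $\varphi-\scal{\cdot}{v}$ coercive $\Leftrightarrow v\in\intdom\varphi^*$, while you use the more elementary implication coercive $\Rightarrow$ bounded below $\Rightarrow \varphi^*(v)<\pinf$; both reach $\dom\varphi^*=\GG$.

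One genuine error in your final remark: the implication \ref{cIidfwetf548b9-23ii}$\Rightarrow$\ref{cIidfwetf548b9-23i} is false, so \ref{cIidfwetf548b9-23i} is \emph{not} the uniformly weakest hypothesis. Take $\GG=\RR$ and $\varphi^*(u)=-\ln u$ on $\RPP$ (so $\dom\varphi^*=\RPP$ is open, and \ref{cIidfwetf548b9-23ii} holds); then $\barc\dom\varphi^*=\RM$, so $y=-1$ lies in the barrier cone and \ref{cIidfwetf548b9-23i} fails. The conclusion of the corollary still holds for this $y$ precisely because the supremum $\sigma_{\RPP}(-1)=0$ is not attained on the open set $\RPP$ --- which is exactly what your argument for \ref{cIidfwetf548b9-23ii} shows. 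So keep \ref{cIidfwetf548b9-23i} and \ref{cIidfwetf548b9-23ii} as independent base cases (as the paper does) and drop the claimed chain through \ref{cIidfwetf548b9-23i}.
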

\begin{proof}
In view of 
Proposition~\ref{pkJhy64989d21}\ref{pkJhy64989d21iii+}, it
suffices to suppose that $y\neq 0$ and to show that 
\begin{equation}
\label{eIidfwetf548b9-24a}
D=\menge{(\mu,u)\in\RR\times\GG}
{\mu+\varphi^*(u)\leq 0\;\:\text{and}\;\:
\sigma_{\dom\varphi^*}(y)=\scal{u}{y}}=\emp.
\end{equation}
Now denote by $\spts\dom\varphi^*$ the set of support points of
$\dom\varphi^*$. Then 
\begin{equation}
\label{eIidfwetf548b9-24b}
D=\menge{(\mu,u)\in\RR\times(\spts\dom\varphi^*)}
{\mu+\varphi^*(u)\leq 0\;\:\text{and}\;\:
\sigma_{\dom\varphi^*}(y)=\scal{u}{y}}.
\end{equation}

\ref{cIidfwetf548b9-23i}: We have $\sigma_{\dom\varphi^*}(y)=\pinf$
and therefore \eqref{eIidfwetf548b9-24a} yields $D=\emp$.

\ref{cIidfwetf548b9-23ii}: We have $\spts\dom\varphi^*=\emp$ and
therefore \eqref{eIidfwetf548b9-24b} yields $D=\emp$.

\ref{cIidfwetf548b9-23iii}$\Rightarrow$\ref{cIidfwetf548b9-23ii}:
Clear.

\ref{cIidfwetf548b9-23iv}$\Rightarrow$\ref{cIidfwetf548b9-23iii}:
\cite[Proposition~14.15]{Livre1}.

\ref{cIidfwetf548b9-23v}$\Rightarrow$\ref{cIidfwetf548b9-23iii}:
Let $v\in\GG$. Then by the Moreau-Rockafellar theorem
\cite[Theorem~14.17]{Livre1}, $\varphi-\scal{\cdot}{v}$ 
is coercive if and only if $v\in\intdom\varphi^*$. Hence
$\GG\subset\intdom\varphi^*$.
\end{proof}

Next, we provide an example of a perspective function
$g\in\Gamma_0(\RR\oplus\GG)$ such that $g\big|_{\dom g}$ 
is discontinuous.

\begin{example}
\label{exIidfwetf548b8-06}
Suppose that $\GG\neq\{0\}$, let $p\in\left]1,\pinf\right[$, and
set
\begin{equation}
\label{eIidfwetf548b1-09a}
g\colon\RR\oplus\GG\to\RX\colon (\eta,y)\mapsto
\begin{cases}
\|y\|^p/\eta^{p-1},&\text{if}\;\:\eta>0;\\
0,&\text{if}\;\:\eta=0\;\;\text{and}\;\;y=0;\\
\pinf,&\text{otherwise.}
\end{cases}
\end{equation}
Then $g\in\Gamma_0(\RR\oplus\GG)$ and 
$g\big|_{\dom g}$ is not continuous at $(0,0)$. 
Indeed, set $\varphi=\|\cdot\|^p$. Then $\varphi$ is a supercoercive
function in $\Gamma_0(\GG)$, and it thus follows from 
\eqref{ebcBdq739jU9-23a} that $\rec\varphi=\iota_{\{0\}}$.
Hence \eqref{eIidfwetf548b1-09a} coincides with 
\eqref{ekJhy64989d08l} and the first claim is therefore 
an application of 
Proposition~\ref{pkJhy64989d21}\ref{pkJhy64989d21ii}
with $g=\widetilde{\varphi}$.
Now set $y=(0,0)\in\RR\times\GG$, let $v\in\GG$ be such that 
$\|v\|=1$, fix a sequence $(\alpha_n)_{n\in\NN}$ in $\RPP$ such that
$\alpha_n\downarrow 0$, and set $(\forall n\in\NN)$ 
$y_n=(\alpha_n^{p/(p-1)},\alpha_nv)$. 
Then $(y_n)_{n\in\NN}$ lies in $\dom g$ and 
$y_n\to y$, but $\lim g(y_n)=1\neq 0=g(y)$. 
\end{example}

We now turn to some algebraic properties.

\begin{proposition}
\label{pkJhy64989d21'}
Let $\varphi\in\Gamma_0(\GG)$. Then the following hold:
\begin{enumerate}
\item
\label{pkJhy64989d21'iv}
Let $\psi\in\Gamma_0(\GG)$ be such that
$\dom\varphi\cap\dom\psi\neq\emp$, and let $\lambda\in\RPP$. 
Then $[\lambda\varphi+\psi]^\sim=\lambda\widetilde{\varphi}+
\widetilde{\psi}\in\Gamma_0(\RR\oplus\GG)$.
\item
\label{pkJhy64989d21'v}
Let $\Lambda\colon\HH\to\GG$ be linear, bounded, and such that 
$\ran\Lambda\cap\dom\varphi\neq\emp$. Set
$\widetilde{\Lambda}\colon\RR\oplus\HH\to\RR\oplus\GG\colon 
(\xi,x)\mapsto(\xi,\Lambda x)$.
Then $[\varphi\circ\Lambda]^\sim=\widetilde{\varphi}\circ
\widetilde{\Lambda}\in\Gamma_0(\RR\oplus\HH)$.
\item
\label{pkJhy64989d21'vi}
Suppose that $\varphi$ is positively homogeneous with
$\dom\varphi=\GG$, let $\phi\in\Gamma_0(\RR)$ be increasing on 
$\ran\varphi$ and such that $0\in\dom\phi$, let $\eta\in\RR$, 
and let $y\in\GG$.
Then $[\phi\circ\varphi]^\sim\in\Gamma_0(\RR\oplus\GG)$ and
$[\phi\circ\varphi]^\sim(\eta,y)=\widetilde{\phi}(\eta,\varphi(y))$.
\end{enumerate}
\end{proposition}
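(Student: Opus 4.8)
The plan is to prove the three statements in order, since each is of a similar flavour: one identifies the perspective of a composite/combined function by working at the level of the defining formula \eqref{ekJhy64989d08l} on the open halfspace $\eta>0$, and then handles the boundary $\eta=0$ via the recession function, using Lemma~\ref{l:pjlaurent}. For \ref{pkJhy64989d21'iv}, on $\eta>0$ one simply computes $[\lambda\varphi+\psi]^\sim(\eta,y)=\eta(\lambda\varphi+\psi)(y/\eta)=\lambda\eta\varphi(y/\eta)+\eta\psi(y/\eta)=\lambda\widetilde{\varphi}(\eta,y)+\widetilde{\psi}(\eta,y)$. For $\eta=0$ one needs $(\rec(\lambda\varphi+\psi))(y)=\lambda(\rec\varphi)(y)+(\rec\psi)(y)$; this is where I would be careful. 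The cleanest route is Lemma~\ref{l:pjlaurent}\ref{l:pjlaurentii}: $\rec\varphi=\sigma_{\dom\varphi^*}$, so one needs $\sigma_{\dom(\lambda\varphi+\psi)^*}=\lambda\sigma_{\dom\varphi^*}+\sigma_{\dom\psi^*}$, which will follow because under the hypothesis $\dom\varphi\cap\dom\psi\neq\emp$ one has $(\lambda\varphi+\psi)^*=(\lambda\varphi)^*\,\infconv\,\psi^*$ with $\dom$ equal to $\dom(\lambda\varphi)^*+\dom\psi^*=\lambda\dom\varphi^*+\dom\psi^*$ (for $\lambda>0$; the case $\lambda=0$ is immediate since then $\lambda\varphi$ is replaced by $\iota_{\dom\varphi}$ in the standard convention and $\rec(0\cdot\varphi)=\iota_{\{0\}}$ only if… — actually here $\lambda\in\RPP$ so $\lambda>0$ and this subtlety does not arise). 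Then $\sigma$ of a Minkowski sum is the sum of the $\sigma$'s, and $\sigma_{\lambda C}=\lambda\sigma_C$. Finally $\lambda\widetilde{\varphi}+\widetilde{\psi}\in\Gamma_0(\RR\oplus\GG)$ because each summand is in $\Gamma_0$ by Proposition~\ref{pkJhy64989d21}\ref{pkJhy64989d21ii} and their domains, $\RPP\times\GG$ (plus boundary), intersect, e.g.\ at $(1,z)$ for $z\in\dom\varphi\cap\dom\psi$.

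For \ref{pkJhy64989d21'v}, on $\eta>0$: $[\varphi\circ\Lambda]^\sim(\xi,x)=\xi\varphi(\Lambda x/\xi)=\xi\varphi(\Lambda(x/\xi))=\widetilde{\varphi}(\xi,\Lambda x)=(\widetilde{\varphi}\circ\widetilde{\Lambda})(\xi,x)$, using linearity of $\Lambda$. For $\xi=0$ I need $\rec(\varphi\circ\Lambda)=(\rec\varphi)\circ\Lambda$; again via Lemma~\ref{l:pjlaurent}\ref{l:pjlaurentii} this reads $\sigma_{\dom(\varphi\circ\Lambda)^*}(x)=\sigma_{\dom\varphi^*}(\Lambda x)$, and since $\ran\Lambda\cap\dom\varphi\neq\emp$ one has $(\varphi\circ\Lambda)^*=\varphi^*\circ(\Lambda^*)$ suitably (the conjugate-of-precomposition formula, e.g.\ \cite[Corollary~13.24]{Livre1}), whose domain is $\Lambda^*(\dom\varphi^*)$ up to closure issues — here the safe statement is $\sigma_{\Lambda^*(\dom\varphi^*)}(x)=\sigma_{\dom\varphi^*}(\Lambda x)$, which is an elementary adjoint identity and holds without any closure subtlety since support functions only see the set, not its closure. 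For $\xi<0$ both sides are $\pinf$. Membership in $\Gamma_0(\RR\oplus\HH)$ then follows either from the general composition-with-linear-map result (since $\widetilde{\Lambda}$ is bounded linear and $\ran\widetilde{\Lambda}\cap\dom\widetilde{\varphi}\ni(1,z)$ for $z\in\ran\Lambda\cap\dom\varphi$) applied to $\widetilde{\varphi}\in\Gamma_0(\RR\oplus\GG)$, or directly from the identity just established together with Proposition~\ref{pkJhy64989d21}\ref{pkJhy64989d21ii} applied to $\varphi\circ\Lambda\in\Gamma_0(\HH)$.

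For \ref{pkJhy64989d21'vi}, first note $\phi\circ\varphi\in\Gamma_0(\GG)$: it is convex because $\varphi$ is convex and $\phi$ is convex and increasing on $\ran\varphi$, it is lsc as a composition of an lsc increasing $\phi$ with a continuous-enough $\varphi$ (here $\dom\varphi=\GG$ and $\varphi\in\Gamma_0(\GG)$ with full domain, hence continuous, so $\phi\circ\varphi$ is lsc), and it is proper since $0\in\dom\phi$ forces $\phi(\varphi(0))<\pinf$ (note $\varphi(0)=0$ by positive homogeneity, and $0\in\dom\phi$). Since $\varphi$ is positively homogeneous with full domain, $\rec\varphi=\varphi$. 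For $\eta>0$: $[\phi\circ\varphi]^\sim(\eta,y)=\eta\,\phi(\varphi(y/\eta))=\eta\,\phi(\varphi(y)/\eta)=\widetilde{\phi}(\eta,\varphi(y))$, using positive homogeneity of $\varphi$. For $\eta=0$: the left side is $(\rec(\phi\circ\varphi))(y)$; I claim this equals $(\rec\phi)(\varphi(y))=\widetilde{\phi}(0,\varphi(y))$. To see it, write $(\rec(\phi\circ\varphi))(y)=\lim_{\alpha\to\pinf}\phi(\varphi(z+\alpha y))/\alpha$ for fixed $z\in\dom(\phi\circ\varphi)$; since $\varphi$ is positively homogeneous one has $\varphi(z+\alpha y)/\alpha\to(\rec\varphi)(y)=\varphi(y)$ as $\alpha\to\pinf$ along any ray, so one reduces to $\lim_{\alpha\to\pinf}\phi(\alpha\,\varphi(y)+o(\alpha))/\alpha=(\rec\phi)(\varphi(y))$, where one uses that $\phi$ is increasing to control the $o(\alpha)$ perturbation (sandwiching). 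For $\eta<0$ both sides are $\pinf$. The main obstacle across all three parts is the interchange of recession functions with the algebraic operation on the boundary $\eta=0$; I expect part \ref{pkJhy64989d21'vi} to be the trickiest, because there the monotonicity and homogeneity hypotheses have to be used precisely to justify $\rec(\phi\circ\varphi)=(\rec\phi)\circ\varphi$, and some care is needed with the convention $+\infty\cdot 0$ when $\varphi(y)=0$ — but there $\widetilde{\phi}(0,0)=(\rec\phi)(0)=0$ since $0\in\dom\phi$, consistent with $(\rec(\phi\circ\varphi))(y)\geq 0$ for directions $y$ in which $\varphi$ vanishes.
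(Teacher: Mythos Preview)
Your overall architecture is right: handle $\eta>0$ by direct computation, $\eta=0$ via the recession function, $\eta<0$ trivially, and conclude membership in $\Gamma_0$ from Proposition~\ref{pkJhy64989d21}\ref{pkJhy64989d21ii}. The endpoints you reach are correct. But your detour through Lemma~\ref{l:pjlaurent}\ref{l:pjlaurentii} and conjugate calculus in parts~\ref{pkJhy64989d21'iv} and~\ref{pkJhy64989d21'v} is both unnecessary and, as written, not fully justified.

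In \ref{pkJhy64989d21'iv} you assert that $\dom\varphi\cap\dom\psi\neq\emp$ implies $(\lambda\varphi+\psi)^*=(\lambda\varphi)^*\infconv\psi^*$. That equality requires a constraint qualification such as $0\in\sri(\dom\varphi-\dom\psi)$; mere nonempty intersection is not enough, so your domain identification $\dom(\lambda\varphi+\psi)^*=\lambda\dom\varphi^*+\dom\psi^*$ is unproved. The same issue arises in \ref{pkJhy64989d21'v}: the formula $(\varphi\circ\Lambda)^*=\inf_{\Lambda^*v=\cdot}\varphi^*(v)$ from \cite[Corollary~13.24]{Livre1} also needs a qualification condition beyond $\ran\Lambda\cap\dom\varphi\neq\emp$. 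The paper bypasses all of this by using the limit formula \eqref{ebcBdq739jU9-23a} directly: picking $z\in\dom\varphi\cap\dom\psi$ (resp.\ $z$ with $\Lambda z\in\dom\varphi$), one gets $\rec(\lambda\varphi+\psi)=\lambda\rec\varphi+\rec\psi$ and $\rec(\varphi\circ\Lambda)=(\rec\varphi)\circ\Lambda$ immediately from the limit, with no conjugates and no CQ. That is the fix you want.

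For \ref{pkJhy64989d21'vi} your argument is essentially correct, but the paper makes a cleaner choice that you overlook: since $\varphi$ is positively homogeneous, $\varphi(0)=0$, and since $0\in\dom\phi$, the point $0$ lies in both $\dom(\phi\circ\varphi)$ and $\dom\phi$. Taking $z=0$ as the base point in \eqref{ebcBdq739jU9-23a} gives
\[
\rec(\phi\circ\varphi)(y)
=\lim_{\alpha\to\pinf}\frac{\phi(\varphi(\alpha y))}{\alpha}
=\lim_{\alpha\to\pinf}\frac{\phi(\alpha\varphi(y))}{\alpha}
=(\rec\phi)(\varphi(y)),
\]
with no $o(\alpha)$ perturbation and no sandwich argument needed. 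Your version works, but this simplification is worth adopting.
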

\begin{proof}
\ref{pkJhy64989d21'iv}: We have $\dom(\varphi+\psi)\neq\emp$. 
Hence $\varphi+\psi\in\Gamma_0(\GG)$ and 
\eqref{ebcBdq739jU9-23a} implies that 
$\rec(\lambda\varphi+\psi)=\lambda\rec\varphi+\rec\psi$. The
claim therefore follows from \eqref{ekJhy64989d08l} and 
Proposition~\ref{pkJhy64989d21}\ref{pkJhy64989d21ii}.

\ref{pkJhy64989d21'v}: Let $\xi\in\RR$ and $x\in\HH$. If
$\xi>0$, then $[\varphi\circ\Lambda]^\sim(\xi,x)=
\xi(\varphi\circ \Lambda)(x/\xi)=\xi\varphi(\Lambda x/\xi)
=(\widetilde{\varphi}\circ\widetilde{\Lambda})(\xi,x)$. Furthermore,
we have $\dom(\varphi\circ\Lambda)\neq\emp$. Hence,
$\varphi\circ\Lambda\in\Gamma_0(\HH)$ and 
\eqref{ebcBdq739jU9-23a} yields
$\rec(\varphi\circ\Lambda)=(\rec\varphi)\circ\Lambda$. Hence,
we derive from \eqref{ekJhy64989d08l} that
\begin{equation}
[\varphi\circ\Lambda]^\sim(0,x)=
\rec(\varphi\circ\Lambda)(x)=(\rec\varphi)(\Lambda x)=
(\widetilde{\varphi}\circ\widetilde{\Lambda})(0,x). 
\end{equation}
Finally, if $\xi<0$, then
$[\varphi\circ\Lambda]^\sim(\xi,x)=\pinf=
(\widetilde{\varphi}\circ\widetilde{\Lambda})(\xi,x)$.
Altogether, the conclusion follows from 
Proposition~\ref{pkJhy64989d21}\ref{pkJhy64989d21ii}.

\ref{pkJhy64989d21'vi}: The assumptions imply that $\varphi$ 
is continuous and that $\varphi(0)=0$. In turn 
$\phi\circ\varphi$ is lower semicontinuous and 
$0\in\dom(\phi\circ\varphi)$. 
It also follows from the assumptions that
$\phi\circ\varphi$ is convex. Altogether, 
$\phi\circ\varphi\in\Gamma_0(\GG)$ and we deduce from 
Proposition~\ref{pkJhy64989d21}\ref{pkJhy64989d21ii} that
$[\phi\circ\varphi]^\sim\in\Gamma_0(\RR\oplus\GG)$. Now suppose
that $\eta>0$. Then 
\begin{equation}
[\phi\circ\varphi]^\sim(\eta,y)=
\eta\phi\big(\varphi(y/\eta)\big)=\eta\phi\big(\varphi(y)/\eta\big)=
\widetilde{\phi}\big(\eta,\varphi(y)\big). 
\end{equation}
Next, we observe that, since
$0\in\dom(\phi\circ\varphi)$ and $0\in\dom\phi$, 
\eqref{ekJhy64989d08l} and \eqref{ebcBdq739jU9-23a} yield
\begin{align}
[\phi\circ\varphi]^\sim(0,y)
&=\rec(\phi\circ\varphi)(y)\nonumber\\
&=\lim_{\alpha\to\pinf}\frac{(\phi\circ\varphi)(0+\alpha y)}{\alpha}
\nonumber\\
&=\lim_{\alpha\to\pinf}\frac{\phi\big(\varphi(\alpha y)\big)}
{\alpha}
\nonumber\\
&=\lim_{\alpha\to\pinf}\frac{\phi\big(0+\alpha\varphi(y)\big)}
{\alpha}\nonumber\\
&=(\rec\phi)\big(\varphi(y)\big)\nonumber\\
&=\widetilde{\phi}\big(0,\varphi(y)\big).
\end{align}
Finally, if $\eta<0$, then $[\phi\circ\varphi]^\sim(\eta,y)=\pinf=
\widetilde{\phi}(\eta,\varphi(y))$.
\end{proof}

\begin{corollary}
\label{cIidfwetf548b8-28}
Let $\psi\in\Gamma_0(\GG)$ and let $C$ be a closed convex subset 
of $\GG$ such that $C\cap\dom\psi\neq\emp$. Set
\begin{equation}
\label{eIidfwetf548b8-29a}
g\colon\RR\times\GG\to\RX\colon (\eta,y)\mapsto
\begin{cases}
\eta\psi(y/\eta),&\text{if}\;\:\eta>0\;\:\text{and}\;\:
y\in\eta(C\cap\dom\psi);\\
(\rec\psi)(y),&\text{if}\;\:\eta=0\;\;\text{and}\;\;y\in\rec C;\\
\pinf,&\text{otherwise}.
\end{cases}
\end{equation}
Then $g\in\Gamma_0(\RR\oplus\GG)$.
\end{corollary}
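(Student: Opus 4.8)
The plan is to exhibit $g$ as the perspective of a single function of $\Gamma_0(\GG)$ and then quote Proposition~\ref{pkJhy64989d21}\ref{pkJhy64989d21ii}. To this end, set $\varphi=\psi+\iota_C$. Since $C$ is closed and convex and $\psi\in\Gamma_0(\GG)$, the function $\varphi$ is lower semicontinuous and convex, and $\dom\varphi=C\cap\dom\psi\neq\emp$ by assumption; hence $\varphi\in\Gamma_0(\GG)$. Proposition~\ref{pkJhy64989d21}\ref{pkJhy64989d21ii} then gives $\widetilde{\varphi}\in\Gamma_0(\RR\oplus\GG)$, so it only remains to verify that $\widetilde{\varphi}=g$, which I would do by examining the three cases $\eta<0$, $\eta>0$, and $\eta=0$ appearing in \eqref{ekJhy64989d08l}.

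For $\eta<0$ both $\widetilde{\varphi}(\eta,y)$ and $g(\eta,y)$ equal $\pinf$. For $\eta>0$, since $\eta>0$ one has $\eta\iota_C(y/\eta)=\iota_{\eta C}(y)$, so $\widetilde{\varphi}(\eta,y)=\eta\varphi(y/\eta)=\eta\psi(y/\eta)+\iota_{\eta C}(y)$; this is finite precisely when $y/\eta\in\dom\psi$ and $y\in\eta C$, i.e., $y\in\eta(C\cap\dom\psi)$, in which case it equals $\eta\psi(y/\eta)$, and it equals $\pinf$ otherwise. This is exactly $g(\eta,y)$ on $\{\eta>0\}$, noting that when $\eta>0$ and $y\in\eta(C\cap\dom\psi)$ the value $\eta\psi(y/\eta)$ is automatically finite.

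The only mildly delicate case is $\eta=0$, where one must show $\rec\varphi=\rec\psi+\iota_{\rec C}$. Fixing $z\in\dom\varphi=C\cap\dom\psi$ and using \eqref{ebcBdq739jU9-23a}, for every $y\in\GG$ one has $\rec\varphi(y)=\lim_{\alpha\to\pinf}\alpha^{-1}\varphi(z+\alpha y)=\lim_{\alpha\to\pinf}\big(\alpha^{-1}\psi(z+\alpha y)+\alpha^{-1}\iota_C(z+\alpha y)\big)$; each summand is a nondecreasing function of $\alpha$ (after subtracting its value at $\alpha=0$, which for the second summand is $0$), so each has a limit in $\RX$ and the limit of the sum is the sum of the limits. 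Hence $\rec\varphi(y)=\rec\psi(y)+\rec\iota_C(y)=\rec\psi(y)+\iota_{\rec C}(y)$, where the identity $\rec\iota_C=\iota_{\rec C}$ follows directly from the definition of the recession cone (or from \eqref{ebcBdq739jU9-23a}). Therefore $\widetilde{\varphi}(0,y)=\rec\psi(y)$ if $y\in\rec C$ and $\pinf$ otherwise, which matches $g(0,y)$. Combining the three cases yields $g=\widetilde{\varphi}\in\Gamma_0(\RR\oplus\GG)$. The main obstacle, insofar as there is one, is thus the recession-function bookkeeping at $\eta=0$: the additivity of $\rec$ over this particular sum and the identification of $\rec\iota_C$. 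An alternative packaging is to apply Proposition~\ref{pkJhy64989d21'}\ref{pkJhy64989d21'iv} with $\lambda=1$ to the pair $(\psi,\iota_C)$, reducing the problem to computing $\widetilde{\iota_C}$ explicitly, but this requires the same recession-cone identity.
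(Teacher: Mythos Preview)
Your argument is correct and is essentially the paper's proof: the paper applies Proposition~\ref{pkJhy64989d21'}\ref{pkJhy64989d21'iv} with $\lambda=1$ and $\varphi=\iota_C$, noting that $\rec(\iota_C+\psi)=\iota_{\rec C}+\rec\psi$, which is exactly the alternative packaging you mention at the end. Your direct verification unpacks the same identification $g=[\iota_C+\psi]^\sim$ and the same recession computation, so there is no substantive difference.
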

\begin{proof}
This is an application of 
Proposition~\ref{pkJhy64989d21'}\ref{pkJhy64989d21'iv} with
$\lambda=1$ and $\varphi=\iota_C$. Indeed, in this setting,
$\rec(\varphi+\psi)=\rec\iota_C+\rec\psi=
\iota_{\rec C}+\rec\psi$ and \eqref{eIidfwetf548b8-29a} yields
$g=[\iota_C+\psi]^\sim$.
\end{proof}

\begin{corollary}
\label{ckJhy64989d20}
Let $\varphi\in\Gamma_0(\GG)$, let $\psi\in\Gamma_0(\GG)$ be 
a positively homogeneous function such that 
$\dom\varphi\cap\dom\psi\neq\emp$, and let $\delta\in\RR$. Then 
$[\varphi+\psi+\delta]^\sim\in\Gamma_0(\RR\oplus\GG)$ and
\begin{equation}
\label{e:f92f}
(\forall\eta\in\RR)(\forall y\in\GG)\quad
[\varphi+\psi+\delta]^\sim(\eta,y)=
\widetilde{\varphi}(\eta,y)+\psi(y)+\delta\eta.
\end{equation}
\end{corollary}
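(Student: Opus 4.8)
The plan is to deduce everything from Proposition~\ref{pkJhy64989d21'}\ref{pkJhy64989d21'iv}. First note that $\psi+\delta\in\Gamma_0(\GG)$ with $\dom(\psi+\delta)=\dom\psi$, so $\dom\varphi\cap\dom(\psi+\delta)=\dom\varphi\cap\dom\psi\neq\emp$. Applying Proposition~\ref{pkJhy64989d21'}\ref{pkJhy64989d21'iv} with $\lambda=1$ to the pair $\varphi$ and $\psi+\delta$ yields $[\varphi+\psi+\delta]^\sim=\widetilde{\varphi}+(\psi+\delta)^\sim\in\Gamma_0(\RR\oplus\GG)$, which already settles the membership claim. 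It then remains to evaluate $(\psi+\delta)^\sim$.

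The one fact that is not pure bookkeeping is that a positively homogeneous $\psi\in\Gamma_0(\GG)$ satisfies $\rec\psi=\psi$. To see this, first note $\psi(0)=0$: positive homogeneity gives $\psi(0)=\lambda\psi(0)$ for every $\lambda\in\RPP$, so $\psi(0)\in\{0,\pinf\}$ by properness, and if $\psi(0)=\pinf$ then, picking $z\in\dom\psi$, lower semicontinuity would force $\pinf=\psi(0)\leq\liminf_{\lambda\downarrow 0}\psi(\lambda z)=\liminf_{\lambda\downarrow 0}\lambda\psi(z)=0$, which is absurd. Hence $0\in\dom\psi$, and \eqref{ebcBdq739jU9-23a} evaluated at $z=0$ gives $(\rec\psi)(y)=\lim_{\alpha\to\pinf}\psi(\alpha y)/\alpha=\lim_{\alpha\to\pinf}\alpha\psi(y)/\alpha=\psi(y)$ for every $y\in\GG$. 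Since an additive constant does not affect a recession function, $\rec(\psi+\delta)=\rec\psi=\psi$ as well.

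Finally I would read off $(\psi+\delta)^\sim$ from \eqref{ekJhy64989d08l}: if $\eta>0$ then $(\psi+\delta)^\sim(\eta,y)=\eta\big(\psi(y/\eta)+\delta\big)=\psi(y)+\delta\eta$ by positive homogeneity of $\psi$; if $\eta=0$ then $(\psi+\delta)^\sim(0,y)=\rec(\psi+\delta)(y)=\psi(y)=\psi(y)+\delta\cdot 0$; and if $\eta<0$ then $(\psi+\delta)^\sim(\eta,y)=\pinf$. Adding $\widetilde{\varphi}(\eta,y)$ in each case — and using that $\widetilde{\varphi}(\eta,y)=\pinf$ whenever $\eta<0$ while $\psi(y)>\minf$ always, so that the right-hand side of \eqref{e:f92f} is $\pinf$ there too — one obtains $[\varphi+\psi+\delta]^\sim(\eta,y)=\widetilde{\varphi}(\eta,y)+\psi(y)+\delta\eta$ in all three cases, which is \eqref{e:f92f}. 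No genuine obstacle is expected; the only mildly delicate point, as noted, is ruling out $\psi(0)=\pinf$ in the verification that $\rec\psi=\psi$.
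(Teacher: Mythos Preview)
Your proof is correct and follows essentially the same approach as the paper: both invoke Proposition~\ref{pkJhy64989d21'}\ref{pkJhy64989d21'iv} and the identity $\rec\psi=\psi$ for a positively homogeneous $\psi\in\Gamma_0(\GG)$, then read off the result from \eqref{ekJhy64989d08l}. The paper's version is terser---it simply asserts $\rec(\varphi+\psi+\delta)=(\rec\varphi)+(\rec\psi)=(\rec\varphi)+\psi$ without justifying $\psi(0)=0$---whereas you supply that detail explicitly.
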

\begin{proof}
This follows from \eqref{ekJhy64989d08l} and
Proposition~\ref{pkJhy64989d21'}\ref{pkJhy64989d21'iv} since 
$\rec(\varphi+\psi+\delta)=(\rec\varphi)+(\rec\psi)=
(\rec\varphi)+\psi$.
\end{proof}

\begin{corollary}
\label{exIidfwetf548b9-10}
Let $\varphi\in\Gamma_0(\GG)$. Then 
$(\forall(\zeta,\eta)\in\RR^2)(\forall y\in\GG)$
$\widetilde{\widetilde{\varphi}}(\zeta,\eta,y)=
\widetilde{\varphi}(\eta,y)$.
\end{corollary}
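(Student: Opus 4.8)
I would begin by observing that $\widetilde{\varphi}\in\Gamma_0(\RR\oplus\GG)$ by Proposition~\ref{pkJhy64989d21}\ref{pkJhy64989d21ii}, so that Definition~\ref{d:perspective} applies to $\widetilde{\varphi}$ (with $\GG$ there replaced by $\RR\oplus\GG$) and $\widetilde{\widetilde{\varphi}}$ is literally the perspective of $\widetilde{\varphi}$: a function on $\RR\times(\RR\oplus\GG)$ whose generic point I write $(\zeta,\eta,y)$ with $\zeta$ playing the role of the homogenizing variable. I would then split the verification according to the sign of $\zeta$, the guiding idea being that the two properties of $\widetilde{\varphi}$ already recorded in Proposition~\ref{pkJhy64989d21} --- positive homogeneity (Proposition~\ref{pkJhy64989d21}\ref{pkJhy64989d21i}) and sublinearity (Proposition~\ref{pkJhy64989d21}\ref{pkJhy64989d21ii+}) --- are precisely what force the perspective operation to reproduce $\widetilde{\varphi}$.

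The cases $\zeta\neq 0$ fall out of \eqref{ekJhy64989d08l} directly: if $\zeta>0$, then positive homogeneity of $\widetilde{\varphi}$ gives $\widetilde{\widetilde{\varphi}}(\zeta,\eta,y)=\zeta\,\widetilde{\varphi}(\eta/\zeta,y/\zeta)=\zeta\cdot\zeta^{-1}\widetilde{\varphi}(\eta,y)=\widetilde{\varphi}(\eta,y)$, while if $\zeta<0$, then $\widetilde{\widetilde{\varphi}}(\zeta,\eta,y)=\pinf$. Everything therefore comes down to the case $\zeta=0$, for which \eqref{ekJhy64989d08l} gives $\widetilde{\widetilde{\varphi}}(0,\eta,y)=(\rec\widetilde{\varphi})(\eta,y)$, so that the assertion reduces to the single identity $\rec\widetilde{\varphi}=\widetilde{\varphi}$.

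This identity is the only point requiring more than bookkeeping, and I expect it to be the main (if modest) obstacle. I would obtain it by combining Lemma~\ref{l:pjlaurent}\ref{l:pjlaurentii}, which gives $\rec\widetilde{\varphi}=\sigma_{\dom(\widetilde{\varphi})^*}$, with Proposition~\ref{pkJhy64989d21}\ref{pkJhy64989d21iii}, which gives $(\widetilde{\varphi})^*=\iota_C$ and hence $\dom(\widetilde{\varphi})^*=C$; this yields $\rec\widetilde{\varphi}=\sigma_C$, and $\sigma_C=\widetilde{\varphi}$ once more by Proposition~\ref{pkJhy64989d21}\ref{pkJhy64989d21iii}. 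A second, self-contained route: since $\widetilde{\varphi}(0,0)=(\rec\varphi)(0)=0$ we have $(0,0)\in\dom\widetilde{\varphi}$, so applying the limit formula in \eqref{ebcBdq739jU9-23a} to $\widetilde{\varphi}$ with base point $z=(0,0)$ and invoking positive homogeneity gives $(\rec\widetilde{\varphi})(\eta,y)=\lim_{\alpha\to\pinf}\alpha^{-1}\widetilde{\varphi}(\alpha\eta,\alpha y)=\widetilde{\varphi}(\eta,y)$.

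As a unified alternative covering all three cases at once, I would apply Proposition~\ref{pkJhy64989d21}\ref{pkJhy64989d21iii} to $\widetilde{\varphi}$ to get $\widetilde{\widetilde{\varphi}}=\sigma_{C'}$ with $C'=\menge{(\nu,\mu,u)\in\RR\times\RR\times\GG}{\nu+(\widetilde{\varphi})^*(\mu,u)\leq 0}=\RM\times C$, whence $\widetilde{\widetilde{\varphi}}(\zeta,\eta,y)=\sigma_{\RM}(\zeta)+\sigma_C(\eta,y)=\iota_{\RP}(\zeta)+\widetilde{\varphi}(\eta,y)$, which equals $\widetilde{\varphi}(\eta,y)$ as soon as $\zeta\geq 0$. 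Either way, the substantive content is entirely concentrated in the identity $\rec\widetilde{\varphi}=\widetilde{\varphi}$, which the machinery of Section~\ref{sec:2} reduces to a single line.
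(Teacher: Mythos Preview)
Your argument is correct and its substance matches the paper's. The paper's proof is a one-liner: since $\widetilde{\varphi}$ is positively homogeneous and belongs to $\Gamma_0(\RR\oplus\GG)$ by Proposition~\ref{pkJhy64989d21}\ref{pkJhy64989d21i}--\ref{pkJhy64989d21ii}, the result follows from Corollary~\ref{ckJhy64989d20} (applied on $\RR\oplus\GG$ with the zero function in the role of $\varphi$, with $\widetilde{\varphi}$ in the role of the positively homogeneous $\psi$, and $\delta=0$). Your case analysis on $\zeta$ is exactly what that corollary packages---the mechanism in both proofs is that $\rec\psi=\psi$ whenever $\psi\in\Gamma_0$ is positively homogeneous---so the two approaches coincide in content; you simply re-derive rather than cite. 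Your dual route via $\widetilde{\widetilde{\varphi}}=\sigma_{\RM\times C}$ is a pleasant alternative the paper does not offer. You are also right to flag, in your unified argument, that the identity holds only for $\zeta\geq 0$; for $\zeta<0$ the left-hand side is $+\infty$ while the right-hand side need not be, a point the corollary as stated glosses over (and which your casewise treatment of $\zeta<0$ leaves equally unresolved, for the same reason).
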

\begin{proof}
By Proposition~\ref{pkJhy64989d21}%
\ref{pkJhy64989d21i}--\ref{pkJhy64989d21ii},
$\widetilde{\varphi}$ is a positively homogeneous function in
$\Gamma_0(\RR\oplus\GG)$. Hence the claim follows from 
Corollary~\ref{ckJhy64989d20}.
\end{proof}

\begin{proposition}
\label{pkJhy64989d28}
Let $I$ be a finite set and let $\eta\in\RR$. 
For every $i\in I$, let $\GG_i$ be a real Hilbert space, let 
$\varphi_i\in\Gamma_0(\GG_i)$, and let $y_i\in\GG_i$. Set
$\bigoplus_{i\in I}\varphi_i\colon\bigoplus_{i\in I}
\GG_i\to\RX\colon(z_i)_{i\in I}\mapsto
\sum_{i\in I}\varphi_i(z_i)$.
Then
\begin{equation}
\label{ekJhy64989d28a}
\Bigg(\bigoplus_{i\in I}\varphi_i\Bigg)^{\sim}
\Big(\eta,(y_i)_{i\in I}\Big)=\Bigg(
\bigoplus_{i\in I}\widetilde{\varphi_i}\Bigg)
\Big((\eta,y_i)\Big)_{i\in I}.
\end{equation}
\end{proposition}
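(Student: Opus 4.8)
The plan is to write $\varphi=\bigoplus_{i\in I}\varphi_i$ on the Hilbert direct sum $\GGG=\bigoplus_{i\in I}\GG_i$ and to evaluate $\widetilde\varphi$ directly from Definition~\ref{d:perspective}, after observing that the right-hand side of \eqref{ekJhy64989d28a} is simply $\sum_{i\in I}\widetilde{\varphi_i}(\eta,y_i)$. First I would record that $\varphi\in\Gamma_0(\GGG)$: since each $\varphi_i$ is proper, $\dom\varphi=\prod_{i\in I}\dom\varphi_i\neq\emp$, and $\varphi$ is lower semicontinuous and convex as a finite sum of lower semicontinuous convex functions of the separate variables; so $\widetilde\varphi$ is well defined and, by Proposition~\ref{pkJhy64989d21}\ref{pkJhy64989d21ii}, lies in $\Gamma_0(\RR\oplus\GGG)$.

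Next I would split into the three cases of \eqref{ekJhy64989d08l}. If $\eta>0$, then $\widetilde\varphi\big(\eta,(y_i)_{i\in I}\big)=\eta\,\varphi\big((y_i)_{i\in I}/\eta\big)=\sum_{i\in I}\eta\,\varphi_i(y_i/\eta)=\sum_{i\in I}\widetilde{\varphi_i}(\eta,y_i)$, which is the claim. If $\eta<0$, both sides equal $\pinf$. The only case that requires work is $\eta=0$, where \eqref{ekJhy64989d08l} reduces the claim to the \emph{additivity of the recession function over direct sums}, namely $(\rec\varphi)\big((y_i)_{i\in I}\big)=\sum_{i\in I}(\rec\varphi_i)(y_i)$.

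To prove this, I would fix $z=(z_i)_{i\in I}\in\dom\varphi$, so that $z_i\in\dom\varphi_i$ for every $i\in I$, and use the limit formula in \eqref{ebcBdq739jU9-23a}. For each $i$, convexity makes $\alpha\mapsto\big(\varphi_i(z_i+\alpha y_i)-\varphi_i(z_i)\big)/\alpha$ nondecreasing on $\RPP$, hence $\varphi_i(z_i+\alpha y_i)/\alpha\to(\rec\varphi_i)(y_i)$ in $\RX$ as $\alpha\to\pinf$. Since $I$ is finite and each $\varphi_i\in\Gamma_0(\GG_i)$ takes values in $\RX$, none of these finitely many limits is $\minf$, so summation commutes with the limit and $(\rec\varphi)\big((y_i)_{i\in I}\big)=\lim_{\alpha\to\pinf}\varphi\big(z+\alpha(y_i)_{i\in I}\big)/\alpha=\sum_{i\in I}(\rec\varphi_i)(y_i)$, as required.

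The main (and essentially only) delicate point is this interchange of the finite summation with the limit; it is legitimate precisely because no summand can take the value $\minf$. If one wishes to avoid it entirely, an equally short argument for the case $\eta=0$ runs through Lemma~\ref{l:pjlaurent}\ref{l:pjlaurentii}: $\rec\varphi=\sigma_{\dom\varphi^*}$, and since $\varphi^*=\bigoplus_{i\in I}\varphi_i^*$ one has $\dom\varphi^*=\prod_{i\in I}\dom\varphi_i^*$, whence $\sigma_{\dom\varphi^*}\big((y_i)_{i\in I}\big)=\sum_{i\in I}\sigma_{\dom\varphi_i^*}(y_i)=\sum_{i\in I}(\rec\varphi_i)(y_i)$. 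Either way, combining the three cases yields \eqref{ekJhy64989d28a}.
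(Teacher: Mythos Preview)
Your proof is correct and follows essentially the same three-case split as the paper, which likewise computes directly for $\eta>0$, observes both sides are $\pinf$ for $\eta<0$, and for $\eta=0$ appeals to \eqref{ebcBdq739jU9-23a} to obtain $\rec\bigoplus_{i\in I}\varphi_i=\bigoplus_{i\in I}\rec\varphi_i$. Your treatment of the $\eta=0$ case is in fact more detailed than the paper's one-line citation, and the alternative route through Lemma~\ref{l:pjlaurent}\ref{l:pjlaurentii} is a nice bonus.
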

\begin{proof}
Suppose that $\eta>0$. Then
\begin{equation}
\label{ekJhy64989d28b}
\Bigg(\bigoplus_{i\in I}\varphi_i\Bigg)^{\sim}
\big(\eta,(y_i)_{i\in I}\big)=
\eta\Bigg(\bigoplus_{i\in I}\varphi_i\bigg)(y_i/\eta)_{i\in I}=
\sum_{i\in I}\eta\varphi_i(y_i/\eta)=
\Bigg(\bigoplus_{i\in I}\widetilde{\varphi_i}\Bigg)
\Big((\eta,y_i)\Big)_{i\in I}.
\end{equation}
Now suppose that $\eta=0$. Then \eqref{ebcBdq739jU9-23a} implies that
$\rec\bigoplus_{i\in I}\varphi_i=\bigoplus_{i\in I}\rec\varphi_i$
and \eqref{ekJhy64989d28a} follows. Finally, if $\eta<0$, then
both sides of \eqref{ekJhy64989d28a} are equal to $\pinf$.
\end{proof}

Perspective functions can be used to provide examples of
nonintuitive behaviors for minimizing sequences in optimization
problems. 

\begin{example}
\label{ex:We439HghyV08a}
Suppose that $\GG=\RR$. Then 
Proposition~\ref{pkJhy64989d21}\ref{pkJhy64989d21ii} 
asserts that the function
\begin{equation}
\label{e:We439HghyV08a}
g=\big[\,|\cdot|^2\big]^\sim\colon\RR^2\to\RX
\colon (\xi_1,\xi_2)\mapsto
\begin{cases}
\xi_2^2/\xi_1,&\text{if}\;\:\xi_1>0;\\
0,&\text{if}\;\:\xi_1=\xi_2=0;\\
\pinf,&\text{otherwise}
\end{cases}
\end{equation}
belongs to $\Gamma_0(\RR^2)$. 
Moreover, Argmin\,$g=\RP\times\{0\}$. 
Now let $p\in\left[1,\pinf\right[$ and set
$(\forall n\in\NN)$ $x_n=((n+1)^{p+2},n+1)$. 
Then $(x_n)_{n\in\NN}$ is a minimizing sequence of
$g$ since $g(x_n)-\min g(\RR^2)=1/(n+1)^p\downarrow 0$. 
However, $d_{\text{Argmin}\, g}(x_n)=n+1\uparrow\pinf$.
To sum up, 
\begin{equation}
g(x_n)-\min g(\RR^2)=O(1/n^p),\quad\text{while}\quad
(\forall x\in\text{Argmin}\,g)\quad\|x_n-x\|\uparrow\pinf.
\end{equation}
This illustrates the fact that, even if it induces a very good 
convergence rate of the objective values $(g(x_n))_{n\in\NN}$, 
a minimizing sequence $(x_n)_{n\in\NN}$ may have extremely 
poor properties in terms of actually approaching a solution 
to the underlying minimization problem.
\end{example}

We now describe constructions of lower semicontinuous convex 
functions based on perspective functions. The first result is 
based on the composition of the perspective of a convex function
with an affine operator.

\begin{proposition}
\label{p:5th}
Let $L\colon\HH\to\GG$ be linear and bounded, let 
$\varphi\in\Gamma_0(\GG)$, let $r\in\GG$, let $u\in\HH$, let 
$\rho\in\RR$, and set 
\begin{equation}
\label{e:perspective2}
f\colon\HH\to\RX\colon x\mapsto
\begin{cases}
\big(\scal{x}{u}-\rho\big)\varphi
\bigg(\Frac{Lx-r}{\scal{x}{u}-\rho}\bigg),
&\text{if}\;\:\scal{x}{u}>\rho;\\
(\rec\varphi)\big(Lx-r\big),
&\text{if}\;\:\scal{x}{u}=\rho;\\
\pinf,&\text{if}\;\:\scal{x}{u}<\rho.
\end{cases}
\end{equation}
Suppose that there exists $z\in\HH$ such that
$Lz\in r+(\scal{z}{u}-\rho)\dom\varphi$ and $\scal{z}{u}\geq\rho$,
and set $A\colon\HH\to\RR\oplus\GG\colon x\mapsto
(\scal{x}{u}-\rho,Lx-r)$. Then 
$f=\widetilde{\varphi}\circ A\in\Gamma_0(\HH)$. 
\end{proposition}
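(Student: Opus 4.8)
The plan is to recognize $f$ as a composition $\widetilde{\varphi}\circ A$ of the perspective $\widetilde{\varphi}$ with the affine-type operator $A$, and then to invoke the composition result already available, namely Proposition~\ref{pkJhy64989d21'}\ref{pkJhy64989d21'v}, after a suitable factorization of $A$. First I would split $A$ into its linear and translation parts. Write $A = T \circ B$, where $B\colon\HH\to\RR\oplus\GG\colon x\mapsto(\scal{x}{u},Lx)$ is linear and bounded, and $T\colon\RR\oplus\GG\to\RR\oplus\GG\colon(\xi,y)\mapsto(\xi-\rho,y-r)$ is a translation. Actually, the cleaner route is to first handle the affine operator $\Lambda\colon\HH\to\GG\colon x\mapsto Lx-r$ together with the scalar affine functional $x\mapsto\scal{x}{u}-\rho$, and observe that $A$ is exactly of the form $x\mapsto(\langle x\mid u\rangle-\rho,\Lambda x)$.

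Next, I would verify that $f=\widetilde{\varphi}\circ A$ pointwise, by checking the three cases of \eqref{e:perspective2} against the three cases of \eqref{ekJhy64989d08l}. When $\scal{x}{u}>\rho$, the first component of $Ax$ is positive, so $\widetilde{\varphi}(Ax)=(\scal{x}{u}-\rho)\varphi\big((Lx-r)/(\scal{x}{u}-\rho)\big)$, which matches. When $\scal{x}{u}=\rho$, the first component vanishes, so $\widetilde{\varphi}(Ax)=(\rec\varphi)(Lx-r)$, which matches. When $\scal{x}{u}<\rho$, the first component is negative and $\widetilde{\varphi}(Ax)=\pinf$, which matches. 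Hence $f=\widetilde{\varphi}\circ A$ as functions on $\HH$.

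It then remains to show $\widetilde{\varphi}\circ A\in\Gamma_0(\HH)$. Since $A$ is continuous and affine and $\widetilde{\varphi}\in\Gamma_0(\RR\oplus\GG)$ by Proposition~\ref{pkJhy64989d21}\ref{pkJhy64989d21ii}, the composition $\widetilde{\varphi}\circ A$ is automatically lower semicontinuous and convex; the only real content is properness, i.e.\ that $\dom f\neq\emp$, equivalently $\ran A\cap\dom\widetilde{\varphi}\neq\emp$. This is precisely where the hypothesis on $z$ enters: the condition $\scal{z}{u}\geq\rho$ together with $Lz\in r+(\scal{z}{u}-\rho)\dom\varphi$ is designed to guarantee $Az\in\dom\widetilde{\varphi}$. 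I would split into the subcase $\scal{z}{u}>\rho$, where $(Lz-r)/(\scal{z}{u}-\rho)\in\dom\varphi$ gives $\widetilde{\varphi}(Az)=(\scal{z}{u}-\rho)\varphi\big((Lz-r)/(\scal{z}{u}-\rho)\big)<\pinf$, and the subcase $\scal{z}{u}=\rho$, where the condition forces $Lz-r\in\{0\}\cdot\dom\varphi = \{0\}$ (interpreting $0\cdot\dom\varphi$ as $\{0\}$ whenever $\dom\varphi\neq\emp$), so $Lz=r$ and $\widetilde{\varphi}(Az)=(\rec\varphi)(0)=0<\pinf$. Either way $z\in\dom f$, so $f$ is proper.

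The main obstacle — really the only subtle point — is the degenerate subcase $\scal{z}{u}=\rho$ and its correct interpretation: one must be careful about the convention for $(\scal{z}{u}-\rho)\dom\varphi$ when the scalar is $0$, and confirm that the intended reading (the set $\{0\}$) is what makes the hypothesis do its job. Modulo that, the argument is a routine application of Proposition~\ref{pkJhy64989d21'}\ref{pkJhy64989d21'v} to the linear part of $A$ composed with a translation, combined with Proposition~\ref{pkJhy64989d21}\ref{pkJhy64989d21ii}; alternatively one can simply invoke the standard fact that the composition of a function in $\Gamma_0$ with a continuous affine operator is in $\Gamma_0$ as soon as the domains meet.
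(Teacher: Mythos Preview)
Your proposal is correct and follows essentially the same route as the paper: identify $f=\widetilde{\varphi}\circ A$, use Proposition~\ref{pkJhy64989d21}\ref{pkJhy64989d21ii} together with the continuity and affinity of $A$ to obtain lower semicontinuity and convexity, and verify properness via the point $z$, splitting into the cases $\scal{z}{u}>\rho$ and $\scal{z}{u}=\rho$ exactly as the paper does. One small remark: your opening reference to Proposition~\ref{pkJhy64989d21'}\ref{pkJhy64989d21'v} is a detour that does not quite fit (that result concerns the perspective of $\varphi\circ\Lambda$, not the composition of $\widetilde{\varphi}$ with an affine map), but you rightly abandon it for the direct ``$\Gamma_0$ composed with continuous affine'' argument, which is precisely the paper's approach.
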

\begin{proof}
By construction, $A$ is a continuous affine operator,
while $\widetilde{\varphi}\in\Gamma_0(\RR\oplus\GG)$ by 
Proposition~\ref{pkJhy64989d21}\ref{pkJhy64989d21ii}. 
Therefore $f=\widetilde{\varphi}\circ A$ 
is lower semicontinuous and convex. Finally, to show that $f$ is
proper, suppose first that $\scal{z}{u}>\rho$. Then 
$(Lz-r)/(\scal{z}{u}-\rho)\in\dom\varphi$ and hence
$z\in\dom f$. On the other hand, if $\scal{z}{u}=\rho$, then
$Lz-r\in\{0\}$. In turn, $f(z)=(\rec\varphi)(0)=0$ and therefore
$z\in\dom f$.
\end{proof}

The next result involves the marginal of a perspective function
(see \cite{Akia16} for a special case in the context of game
theory). 

\begin{proposition}
\label{p:gaubert}
Let $\varphi\in\Gamma_0(\GG)$ and let $K$ be a nonempty closed
bounded interval in $\RP$. Define
\begin{equation}
\label{eIidfwetf548b9-11}
g\colon\GG\to\RR\colon y\mapsto
\inf_{\eta\in K}\widetilde{\varphi}(\eta,y).
\end{equation}
Then $g\in\Gamma_0(\GG)$. 
\end{proposition}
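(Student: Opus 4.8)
The plan is to realize $g$ as the image (partial infimal projection) of the jointly convex, lower semicontinuous function $\widetilde\varphi+\iota_{K\times\GG}$ under the canonical projection $(\eta,y)\mapsto y$, and then to invoke a standard ``infimal projection preserves convexity and, under a compactness/coercivity condition, lower semicontinuity and properness'' result. Concretely, set $h\colon\RR\oplus\GG\to\RX\colon(\eta,y)\mapsto\widetilde\varphi(\eta,y)+\iota_K(\eta)$. By Proposition~\ref{pkJhy64989d21}\ref{pkJhy64989d21ii}, $\widetilde\varphi\in\Gamma_0(\RR\oplus\GG)$; since $K$ is a nonempty closed interval, $\iota_{K\times\GG}\in\Gamma_0(\RR\oplus\GG)$, and because $K\cap\dom\widetilde\varphi(\cdot,y)\neq\emp$ for at least one $y$ (take $\eta\in K$ and $y\in\eta\dom\varphi$, which is nonempty since $K\subset\RP$ and if $0\in K$ one uses $\dom\rec\varphi\ni 0$), we get $h\in\Gamma_0(\RR\oplus\GG)$. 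Then $g=y\mapsto\inf_{\eta\in\RR}h(\eta,y)$ is the marginal of $h$ over the first coordinate.

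Next I would establish the three conclusions. Convexity of $g$ is immediate: the marginal of a convex function over one block of variables is convex (this is elementary, but also e.g.\ \cite[Proposition~8.35]{Livre1}). Properness: since $h$ is proper, $g$ is not identically $\pinf$; and $g>-\infty$ everywhere because $\widetilde\varphi$, being sublinear (Proposition~\ref{pkJhy64989d21}\ref{pkJhy64989d21ii+}), is bounded below by a continuous linear functional, so for each fixed $y$ the map $\eta\mapsto\widetilde\varphi(\eta,y)$ is bounded below on the bounded set $K$, whence $g(y)>-\infty$. (This is exactly why $K$ is taken bounded.) For lower semicontinuity, the key point is that the infimum defining $g(y)$ is over the \emph{compact} set $K$: I would use the standard fact that if $h\in\Gamma_0(\RR\oplus\GG)$ and $K\subset\RR$ is compact, then $y\mapsto\inf_{\eta\in K}h(\eta,y)$ is lower semicontinuous, because for any $\lambda\in\RR$ the sublevel set $\menge{y}{g(y)\le\lambda}$ equals the image under the (closed, since $K$ compact) projection of the closed set $\menge{(\eta,y)}{\eta\in K,\ h(\eta,y)\le\lambda}$ — more cleanly, one shows $\epi g$ is closed by a subsequence/compactness argument: if $(y_n,\lambda_n)\to(y,\lambda)$ with $g(y_n)\le\lambda_n+\epsilon_n$, pick near-minimizers $\eta_n\in K$ with $\widetilde\varphi(\eta_n,y_n)\le\lambda_n+1/n$, pass to a convergent subsequence $\eta_n\to\eta\in K$, and use lower semicontinuity of $\widetilde\varphi$ to get $\widetilde\varphi(\eta,y)\le\lambda$, hence $g(y)\le\lambda$. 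Alternatively this is precisely \cite[Proposition~12.34]{Livre1} (lsc of infimal projection along a compact set) applied to $h$.

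The main obstacle — and the reason the hypotheses are stated as they are — is the interaction at $\eta=0$. One must check that the near-minimizing sequence $\eta_n\in K$ genuinely has a cluster point in $K$ at which $\widetilde\varphi$ controls the limit, which is automatic by compactness of $K$; but one must also be sure $g$ does not secretly take the value $-\infty$ via the $\eta=0$ branch, i.e.\ via $(\rec\varphi)(y)=\sigma_{\dom\varphi^*}(y)$ (Lemma~\ref{l:pjlaurent}\ref{l:pjlaurentii}), which is $\ge 0$ at $y=0$ and in general bounded below on any fixed line only because $\widetilde\varphi$ is sublinear — again handled by the sublinearity/lower-bound remark above. So the real content is: compactness of $K$ gives lsc, sublinearity of $\widetilde\varphi$ plus boundedness of $K$ gives $g>-\infty$, and $h\in\Gamma_0$ gives $g\not\equiv\pinf$; assembling these yields $g\in\Gamma_0(\GG)$. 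I would write it in that order: (1) $h\in\Gamma_0(\RR\oplus\GG)$; (2) $g$ convex and $g\not\equiv\pinf$; (3) $g>-\infty$ using sublinearity and boundedness of $K$; (4) $g$ lsc using compactness of $K$ via the epigraph-closedness argument; conclude.
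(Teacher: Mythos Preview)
Your proposal is correct and follows essentially the same route as the paper: both recognize $g$ as the marginal (infimal projection) of $\widetilde\varphi$ over the compact set $K$, invoke Proposition~\ref{pkJhy64989d21}\ref{pkJhy64989d21ii} to get $\widetilde\varphi\in\Gamma_0(\RR\oplus\GG)$, and then deduce convexity, lower semicontinuity, and properness from standard marginal-function results. The paper is simply terser, citing \cite[Proposition~8.26]{Livre1} for convexity and \cite[Lemma~1.29]{Livre1} for lower semicontinuity and properness directly, whereas you unpack these (and note that your sublinearity argument for $g>-\infty$ is fine but unnecessary: since $\widetilde\varphi$ never takes $-\infty$ and is lsc, its infimum over the compact $K$ is attained and hence lies in $\RX$).
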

\begin{proof}
Proposition~\ref{pkJhy64989d21}\ref{pkJhy64989d21ii} asserts
that $\widetilde{\varphi}\in\Gamma_0(\RR\oplus\GG)$. In turn, it
follows from \cite[Proposition~8.26]{Livre1} that $g$ is convex 
and from \cite[Lemma~1.29]{Livre1} that it is lower 
semicontinuous and proper. 
\end{proof}

\section{Examples of perspective functions}
\label{sec:3}
Our first construction involves a difference of convex
functions.

\begin{corollary}
\label{cIidfwetf548b9-24}
Let $\psi\in\Gamma_0(\GG)$ and let 
$\env(\psi^*)\colon u\mapsto\inf_{v\in\GG}(\psi^*(v)+\|u-v\|^2/2)$
be the Moreau envelope of $\psi^*$. Set
\begin{equation}
\label{eIidfwetf548b9-24v}
g\colon\RR\times\GG\to\RX\colon (\eta,y)\mapsto
\begin{cases}
\dfrac{\|y\|^2}{2\eta}-\eta(\env\psi)(y/\eta),
&\text{if}\;\:\eta>0;\\
\sigma_{\dom\psi}(y),&\text{if}\;\:\eta=0;\\
\pinf,&\text{if}\;\:\eta<0.
\end{cases}
\end{equation}
Then $g=[\env(\psi^*)]^\sim\in\Gamma_0(\RR\oplus\GG)$.
\end{corollary}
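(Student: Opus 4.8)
The plan is to recognize the function $g$ in \eqref{eIidfwetf548b9-24v} as the perspective of the Moreau envelope $\env(\psi^*)$ and to apply Proposition~\ref{pkJhy64989d21}\ref{pkJhy64989d21ii}. First I would set $\varphi=\env(\psi^*)$ and verify that $\varphi\in\Gamma_0(\GG)$: the Moreau envelope of a function in $\Gamma_0(\GG)$ is a real-valued convex function, continuous (indeed Lipschitz-differentiable), so $\varphi\in\Gamma_0(\GG)$, which makes Definition~\ref{d:perspective} applicable and gives $\widetilde{\varphi}\in\Gamma_0(\RR\oplus\GG)$ immediately.

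The substantive step is to check that the explicit formula in \eqref{eIidfwetf548b9-24v} actually coincides with $\widetilde{\varphi}$ given by \eqref{ekJhy64989d08l}. For $\eta>0$ this is the identity $\eta\,(\env(\psi^*))(y/\eta)=\|y\|^2/(2\eta)-\eta(\env\psi)(y/\eta)$; I would obtain it from the Moreau decomposition $\env(\psi^*)(v)+\env\psi(v)=\|v\|^2/2$ applied at $v=y/\eta$, then multiply through by $\eta$. For $\eta=0$, I need $(\rec\varphi)(y)=\sigma_{\dom\psi}(y)$. Here I would use Lemma~\ref{l:pjlaurent}\ref{l:pjlaurentii}, which gives $\rec\varphi=\sigma_{\dom\varphi^*}$, together with the fact that the conjugate of the Moreau envelope satisfies $(\env(\psi^*))^*=\psi+\|\cdot\|^2/2$ (this is the standard conjugacy formula $(f\,\square\,(\|\cdot\|^2/2))^*=f^*+\|\cdot\|^2/2$ applied to $f=\psi^*$, using $\psi^{**}=\psi$). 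Consequently $\dom(\env(\psi^*))^*=\dom\psi$, so $\rec\varphi=\sigma_{\dom\psi}$, as required. The case $\eta<0$ is the trivial agreement of both formulas with $\pinf$.

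The main obstacle, such as it is, is purely bookkeeping: one must cite or invoke the right conjugacy/envelope identities (Moreau decomposition and the conjugate of an infimal convolution with the energy) to pin down both the $\eta>0$ branch and the domain of $(\env(\psi^*))^*$; none of this is deep, but it does require being careful that $\psi\in\Gamma_0(\GG)$ so that $\psi^{**}=\psi$. Once \eqref{eIidfwetf548b9-24v} is identified with $\widetilde{\varphi}$ for $\varphi=\env(\psi^*)\in\Gamma_0(\GG)$, the conclusion $g=[\env(\psi^*)]^\sim\in\Gamma_0(\RR\oplus\GG)$ follows directly from Proposition~\ref{pkJhy64989d21}\ref{pkJhy64989d21ii}, and the proof is complete.
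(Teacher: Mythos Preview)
Your proposal is correct and follows essentially the same argument as the paper: set $\varphi=\env(\psi^*)$, use Moreau's decomposition to identify the $\eta>0$ branch, compute $\varphi^*=\psi+\|\cdot\|^2/2$ via the conjugacy of the infimal convolution (so $\dom\varphi^*=\dom\psi$), and invoke Lemma~\ref{l:pjlaurent}\ref{l:pjlaurentii} and Proposition~\ref{pkJhy64989d21}\ref{pkJhy64989d21ii}. The only cosmetic difference is that the paper starts by writing $\varphi=q-\env\psi$ and then identifies it with $\env(\psi^*)$, whereas you start from $\env(\psi^*)$ directly.
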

\begin{proof}
Set $q=\|\cdot\|^2/2$ and $\varphi=q-\env\psi$, and let
$\infconv$ denote the infimal convolution operation.
It follows from Moreau's decomposition \cite{Mor62b} (see also
\cite[Theorem~14.3(i)]{Livre1}) that
$\varphi=\env(\psi^*)\in\Gamma_0(\GG)$. In addition,
from basic convex analysis, 
\begin{equation}
\varphi^*=(\psi^*\infconv q)^*=\psi^{**}+q=\psi+q 
\end{equation}
and therefore 
Lemma~\ref{l:pjlaurent}\ref{l:pjlaurentii} yields
\begin{equation}
\label{eIidfwetf548b9-25a}
\rec\varphi=\sigma_{\dom\varphi^*}=\sigma_{\dom \psi}.
\end{equation}
In view of \eqref{ekJhy64989d08l} and 
Proposition~\ref{pkJhy64989d21}\ref{pkJhy64989d21ii}, we
conclude that $g=\widetilde{\varphi}\in\Gamma_0(\RR\oplus\GG)$.
\end{proof}

\begin{example}[generalized Huber function]
\label{ex:ghuber}
Let $C$ be a nonempty closed convex subset of $\GG$ and let $P_C$
denote its projector. Upon setting $\psi=\iota_C$ in 
Corollary~\ref{cIidfwetf548b9-24}, we deduce that the function 
\begin{equation}
\label{eIidfwetf548b9-24u}
g\colon\RR\times\GG\to\RX\colon (\eta,y)\mapsto
\begin{cases}
\scal{y}{P_C(y/\eta)}-\dfrac{\eta\|P_C(y/\eta)\|^2}{2},
&\text{if}\;\:y\notin\eta C\;\;\text{and}\;\;\eta>0;\\[3mm]
\dfrac{\|y\|^2}{2\eta},
&\text{if}\;\:y\in\eta C\;\;\text{and}\;\;\eta>0;\\[3mm]
\sigma_C(y),&\text{if}\;\:\eta=0;\\
\pinf,&\text{if}\;\:\eta<0
\end{cases}
\end{equation}
is in $\Gamma_0(\RR\oplus\GG)$. More precisely, 
$g=\widetilde{\varphi}$, where
$\varphi=\env(\psi^*)=\env\sigma_C$.
Let us further specialize by
taking $C=B(0;\rho)$ for some  
$\rho\in\RPP$. Then \eqref{eIidfwetf548b9-24u} reduces to 
\begin{equation}
\label{eIidfwetf548b9-24c}
g\colon\RR\times\GG\to\RX\colon (\eta,y)\mapsto
\begin{cases}
\rho\|y\|-\dfrac{\eta\rho^2}{2},&\text{if}\;\:\|y\|>\eta\rho\;\;
\text{and}\;\;\eta>0;\\
\dfrac{\|y\|^2}{2\eta},&\text{if}\;\:\|y\|\leq\eta\rho\;\;
\text{and}\;\;\eta>0;\\
\rho\|y\|,&\text{if}\;\:\eta=0;\\
\pinf,&\text{if}\;\:\eta<0.
\end{cases}
\end{equation}
We infer from Corollary~\ref{cIidfwetf548b9-24} that
$g=\widetilde{\varphi}$, where 
$\varphi=\env(\rho\|\cdot\|)=q-d_C^2/2$, that is,
\begin{equation}
\label{eUoboO83bd7212}
\varphi\colon\GG\to\RX\colon y\mapsto
\begin{cases}
\rho\|y\|-\Frac{\rho^2}{2}, &\text{if}\;\:\|y\|>\rho;\\[+2mm]
\Frac{\|y\|^2}{2},&\text{if}\;\:\|y\|\leq\rho.
\end{cases}
\end{equation}
In particular, if $\GG=\RR$, then $\varphi$ is known as 
the Huber function. This function was introduced in \cite{Hube64} 
and it plays an important role in robust statistics and signal 
processing \cite{Hube09,Niko05}, while its perspective function 
appears implicitly in robust regression problems 
\cite{Hube09,Lamb11,Owen07}. The fact that the Huber function is
the Moreau envelope of the absolute value function can already be
found in \cite{Boug89}; see also \cite{Boug99}. On the other hand,
if we specialize the perspective function \eqref{eIidfwetf548b9-24c} 
to the case when $\GG=\RR$ and $\rho=1$, we obtain the function 
\begin{equation}
\label{e:OpwkKjb6613c}
g\colon\RR^2\to\RX\colon (\eta,y)\mapsto
\begin{cases}
|y|-\dfrac{\eta}{2},&\text{if}\;\:|y|>\eta\;\;
\text{and}\;\;\eta>0;\\
\dfrac{|y|^2}{2\eta},&\text{if}\;\:|y|\leq\eta\;\;
\text{and}\;\;\eta>0;\\
|y|,&\text{if}\;\:\eta=0;\\
\pinf,&\text{if}\;\:\eta<0,
\end{cases}
\end{equation}
which is used in computer vision \cite{Zach10}, where it is
called the bivariate Huber function.
\end{example}

We now consider a function that combines distance and support
functions.

\begin{example}[generalized Berhu function]
\label{exIidfwetf548b9-26}
Let $C$ and $D$ be nonempty closed convex subsets of $\GG$, 
and let $\rho\in\RPP$. Then the function
\begin{equation}
\label{eIidfwetf548b9-26v}
g\colon\RR\times\GG\to\RX\colon (\eta,y)\mapsto
\begin{cases}
\dfrac{\eta d_C^2(y/\eta)}{2\rho}+\sigma_D(y),
&\text{if}\;\:\eta>0\;\;\text{and}\;\;y\notin\eta C;\\
\sigma_D(y),
&\text{if}\;\:\eta>0\;\;\text{and}\;\;y\in\eta C;\\
\sigma_D(y),&\text{if}\;\:\eta=0\;\;\text{and}\;\;y\in\rec C;\\
\pinf,&\text{otherwise}
\end{cases}
\end{equation}
is in $\Gamma_0(\RR\oplus\GG)$. To show this, set $q=\|\cdot\|^2/2$,
$\varphi=d_C^2/(2\rho)$, and
$\psi=\sigma_D$. Then $\varphi\in\Gamma_0(\GG)$ and 
$\psi$ is a positively homogeneous function in $\Gamma_0(\GG)$
such that $0\in\dom\varphi\cap\dom\psi$.
Furthermore, since $\varphi=\iota_C\infconv(q/\rho)$, we have 
$\varphi^*=\iota^*_C+(q/\rho)^*=\iota^*_C+\rho q$ and
therefore $\dom\varphi^*=\dom\iota^*_C$. In turn,
Lemma~\ref{l:pjlaurent} yields
\begin{equation}
\rec\varphi=\sigma_{\dom\varphi^*}=\sigma_{\dom\iota_C^*}=
\rec{\iota_C}=\iota_{\rec C}. 
\end{equation}
Altogether,
\begin{equation}
\label{e:1234f847}
g=\bigg[\frac{d_C^2}{2\rho}+\sigma_D\bigg]^\sim
\end{equation}
and the claim follows from 
Corollary~\ref{ckJhy64989d20}. An especially interesting case is
obtained when $C=B(0;\rho)$ and $D=B(0;1)$. Then $\rec C=\{0\}$,
$\sigma_D=\|\cdot\|$, and \eqref{eIidfwetf548b9-26v} therefore becomes
\begin{equation}
\label{eIidfwetf548b9-26w}
g\colon\RR\times\GG\to\RX\colon (\eta,y)\mapsto
\begin{cases}
\dfrac{\|y\|^2+\rho^2\eta^2}{2\eta\rho},
&\text{if}\;\:\eta>0\;\;\text{and}\;\;\|y\|>\eta\rho;\\
\|y\|,
&\text{if}\;\:\eta>0\;\;\text{and}\;\;\|y\|\leq\eta\rho;\\
0,&\text{if}\;\:\eta=0\;\;\text{and}\;\;y=0;\\
\pinf,&\text{otherwise.}
\end{cases}
\end{equation}
As seen above, $g$ is the perspective function of 
\begin{equation}
\label{eIidfwetf548b9-26o}
\vartheta\colon\GG\to\RX\colon y\mapsto
\begin{cases}
\dfrac{\|y\|^2+\rho^2}{2\rho},&\text{if}\;\:\|y\|>\rho;\\
\|y\|,&\text{if}\;\:\|y\|\leq\rho.
\end{cases}
\end{equation}
In the special case when $\GG=\RR$, $\vartheta$ arises in 
mechanics \cite{Alib13,Bouc12} as well as in statistics 
\cite{Owen07}, where it is called the Berhu (or reverse Huber)
function. The reason for this terminology is that 
\eqref{eUoboO83bd7212} exhibits a quadratic behavior on 
$B(0;\rho)$ and a sublinear behavior outside, while 
\eqref{eIidfwetf548b9-26o}  exhibits a sublinear behavior on 
$B(0;\rho)$ and a quadratic behavior outside. Applications of 
the perspective of the Berhu function in robust regression can 
be found in \cite{Lamb16} and in \cite{Owen07}.
\end{example}

We now turn to a type of function that is used in support vector 
machines and in computer vision.

\begin{example}[generalized Vapnik loss function]
\label{ex:OpwkKjb6610}
Let $\varepsilon\in\RPP$. By applying 
Proposition~\ref{pkJhy64989d21'}\ref{pkJhy64989d21'vi}
to $\phi\colon t\mapsto\max\{|t|-\varepsilon,0\}$ and
$\varphi=\|\cdot\|$, we obtain that the function 
\begin{equation}
\label{e:OpwkKjb6610b}
g\colon\RR\times\GG\to\RX\colon (\eta,y)\mapsto
\begin{cases}
d_{B(0;\varepsilon\eta)}(y),&\text{if}\;\:\eta>0;\\
\|y\|,&\text{if}\;\:\eta=0;\\
\pinf,&\text{if}\;\:\eta<0.
\end{cases}
\end{equation}
is the perspective function of 
$\vartheta=\text{max}\{\|\cdot\|-\varepsilon,0\}\big]$ and that
it is in $\Gamma_0(\RR\oplus\GG)$.
A special case of this function appears in the context of computer
vision in \cite{Zach10}. When $\GG=\RR$, $\vartheta$ is known as 
Vapnik's $\varepsilon$-insensitive loss function and it is 
employed in the area of support vector machines \cite{Vapn00}.
\end{example}

Our next construction involves a mix of positively homogeneous 
and norm-like functions.

\begin{example}
\label{exIidfwetf548b9-29}
Let $\psi\colon\GG\to\RP$ be a proper, lower semicontinuous,
positively homogeneous convex function, let $\delta\in\RR$, 
let $\rho\in\RP$, let $p\in\left[1,\pinf\right[$, let $v\in\GG$,
and set
\begin{equation}
\label{eIidfwetf548b9-29v}
g\colon\RR\times\GG\to\RX\colon(\eta,y)\mapsto
\begin{cases}
\delta\eta+\scal{y}{v}+\big|\rho\eta^p+\psi^p(y)\big|^{1/p},
&\text{if}\;\:\eta\geq 0;\\
\pinf,&\text{if}\;\:\eta<0.
\end{cases}
\end{equation}
Then $g=[\delta+\scal{\cdot}{v}+|\rho+\psi^p|^{1/p}]^\sim
\in\Gamma_0(\RR\oplus\GG)$. 
Indeed, set $\varphi=\delta+\scal{\cdot}{v}+|\rho+\psi^p|^{1/p}$
and $\phi=\big|\rho+|\cdot|^p\big|^{1/p}$. Then $\rec\phi=|\cdot|$ 
and $\varphi=\delta+\scal{\cdot}{v}+\phi\circ\psi$.
Altogether, we derive from Corollary~\ref{ckJhy64989d20} and 
Proposition~\ref{pkJhy64989d21'}\ref{pkJhy64989d21'vi} that
$g=\widetilde{\varphi}\in\Gamma_0(\RR\oplus\GG)$. Let us now
consider some special cases of this perspective function.
\begin{enumerate}
\item
\label{exIidfwetf548b9-29i}
Set $\psi=\|\cdot\|$, $v=0$, and $p=2$. Then 
\eqref{eIidfwetf548b9-29v} leads to the perspective function
\begin{equation}
\label{eIidfwetf548b9-30a}
g\colon\RR\times\GG\to\RX\colon(\eta,y)\mapsto
\begin{cases}
\delta\eta+\sqrt{\rho\eta^2+\|y\|^2},&\text{if}\;\:\eta\geq 0;\\
\pinf,&\text{if}\;\:\eta<0.
\end{cases}
\end{equation}
In the case when $\GG=\RR$, $\rho=1$, and $\delta=-1$, this 
function shows up in computer vision \cite{Hart03}, where 
$g(\eta,\cdot)$ is called the pseudo-Huber function.
\item
\label{exIidfwetf548b9-29ii}
Let $D$ be a nonempty closed convex cone in $\GG$, let $v=0$,
let $\delta=0$, let $\rho=1$, and let $|||\cdot|||$ be a norm on
$\GG$. Set $\HH=\RR\oplus\GG$, $K=\RP\times D$, and 
$\psi=|||\cdot|||+\iota_D$. Define a norm on $\HH$ by
$|||\cdot|||_p\colon(\eta,y)\mapsto(|\eta|^p+|||y|||^p)^{1/p}$.
Then \eqref{eIidfwetf548b9-29v} yields $g=|||\cdot|||_p+\iota_K$,
i.e.,
\begin{equation}
\label{e:OpwkKjb6602a}
g\colon\HH\to\RX\colon z\mapsto
\begin{cases}
|||z|||_p,&\text{if}\;\:z\in K;\\
\pinf,&\text{if}\;\:z\notin K.
\end{cases}
\end{equation}
\item
\label{exIidfwetf548b9-29iii}
Consider the following setting in \ref{exIidfwetf548b9-29ii}:
$N\geq 2$ is an integer, $\GG=\RR^{N-1}$, 
$|||\cdot|||$ is the $\ell^p$ norm on
$\RR^{N-1}$, $D=\RP^{N-1}$, and $K=\RP^N$. Then, if $\|\cdot\|_p$
denotes the $\ell^p$ norm on $\RR^N$, the corresponding 
perspective function \eqref{eIidfwetf548b9-29v} is 
\begin{equation}
\label{eIidfwetf548b9-30b}
g\colon\RR^N\to\RX\colon z\mapsto
\begin{cases}
\|z\|_p,&\text{if}\;\:z\in\RP^N;\\
\pinf,&\text{if}\;\:z\notin\RP^N.
\end{cases}
\end{equation}
\item
\label{exIidfwetf548b9-29i-}
Set $\GG=\RR$, $\psi=|\cdot|$, $v=-1$, $\rho=1$, and $\delta=-1$. 
Then \eqref{eIidfwetf548b9-29v} yields the generalized Fischer-Burmeister 
function
\begin{equation}
g\colon\RR^2\to\RX\colon(\eta,y)\mapsto
\begin{cases}
-\eta-y+\big|\eta^p+|y|^p\big|^{1/p},&\text{if}\;\:\eta\geq 0;\\
\pinf,&\text{if}\;\:\eta<0,
\end{cases}
\end{equation}
which is used is nonlinear complementarity problems \cite{Chen06}.
The original Fischer-Burmeister function is obtained for $p=2$.
\end{enumerate}
\end{example}

The example below extends constructions found in 
robust estimation and in machine learning.

\begin{example}
\label{exkJhy64989d27}
Let $\phi\in\Gamma_0(\RR)$ be an even function, let $v\in\GG$, 
and let $\delta\in\RR$. Then $\phi$ in increasing on $\RP$ and
$0\in\dom\phi$. In turn, it follows from 
Corollary~\ref{ckJhy64989d20} and 
Proposition~\ref{pkJhy64989d21'}\ref{pkJhy64989d21'vi}
that the function 
\begin{equation}
\label{ekJhy64989d17A}
g\colon\RR\oplus\GG\to\RX\colon (\eta,y)\mapsto
\begin{cases}
\delta\eta+\scal{y}{v}+\eta\phi(\|y\|/\eta),&\text{if}\;\:\eta>0;\\
\scal{y}{v}+(\rec\phi)(\|y\|),
&\text{if}\;\:\eta=0;\\
\pinf,&\text{if}\;\:\eta<0
\end{cases}
\end{equation}
is in $\Gamma_0(\RR\oplus\GG)$. More precisely, 
$g=[\delta+\scal{\cdot}{v}+\phi\circ\|\cdot\|]^\sim$.
Now assume further that $\dom\phi^*=\RR$. 
Then \cite[Theorem~3.4]{Ccm101} 
implies that $\phi^{**}=\phi$ is supercoercive and, therefore,
that $\varphi$ is likewise. In turn, we derive from 
\eqref{ebcBdq739jU9-23a} that $\rec\varphi=\iota_{\{0\}}$, which
allows us to rewrite \eqref{ekJhy64989d17A} as
\begin{equation}
\label{ekJhy64989d18A}
g\colon\RR\oplus\GG\to\RX\colon (\eta,y)\mapsto
\begin{cases}
\delta\eta+\scal{y}{v}+\eta\phi(\|y\|/\eta),&\text{if}\;\:\eta>0;\\
0,&\text{if}\;\:\eta=0\;\;\text{and}\;\;y=0;\\
\pinf,&\text{otherwise}.
\end{cases}
\end{equation}
In particular, when $\GG=\RR$, $\phi=|\cdot|^2$, and $v=0$,
\eqref{ekJhy64989d18A} has been used in robust estimation 
\cite{Hube09} and in machine learning \cite{Micc13}. 
\end{example}

\begin{example}
Let $\rho\in\RPP$, let $p\in\left[1,\pinf\right[$, and set 
\begin{equation}
\label{ekKjw53N8Uy2-04a}
\begin{array}{rcl}
g\colon\RR\times\GG&\to&\RX\\[3mm]
(\eta,y)&\mapsto&
\begin{cases}
\dfrac{\rho\|y\|^p}{\eta^{p-1}}+p\eta\ln\eta
-\eta\ln\big(\eta^p+\rho\|y\|^p\big),&\text{if}\;\:\eta>0;\\
\rho\|y\|,&\text{if}\;\:\eta=0\;\text{and}\;p=1;\\
0,&\text{if}\;\:\eta=0,\;y=0,\;\text{and}\;p>1;\\
\pinf,&\text{otherwise.}
\end{cases}
\end{array}
\end{equation}
Upon invoking 
Proposition~\ref{pkJhy64989d21}\ref{pkJhy64989d21'vi} with
$\varphi=\|\cdot\|$ and 
\begin{equation}
\label{ekKjw53N8Uy2-04b}
\phi\colon\RR\to\RX\colon t\mapsto
\rho|t|^p-\ln\big(1+\rho|t|^p\big),
\end{equation}
we see that $g=[\phi\circ\varphi]^\sim\in\Gamma_0(\RR\oplus\GG)$.
For $p=1$, \eqref{ekKjw53N8Uy2-04b} arises in inverse problems 
\cite{Invp07}. For $\GG=\RR$ and $\rho=p=1$, \eqref{ekKjw53N8Uy2-04a} 
is closely related to the so-called ``fair'' function in robust 
statistics \cite[Section~6.4.5]{Reyw83}. 
For $\GG=\RR^N$ and $\rho=p=1$, \eqref{ekKjw53N8Uy2-04a} is used
in least-squares regularization \cite{Elad07}.
\end{example}

\begin{example}
Let $p\in\left[1,\pinf\right[$ and set 
\begin{equation}
\label{ekKjw53N8Uy2-04c}
\begin{array}{rcl}
g\colon\RR\times\GG&\to&\RX\\[3mm]
(\eta,y)&\mapsto&
\begin{cases}
p\eta\ln\eta-\eta\ln(\eta^p-\|y\|^p),
&\text{if}\;\:\eta>0\;\text{and}\;\|y\|<\eta;\\
0,&\text{if}\;\:\eta=0\;\text{and}\;y=0;\\
\pinf,&\text{otherwise.}
\end{cases}
\end{array}
\end{equation}
It follows from 
Proposition~\ref{pkJhy64989d21}\ref{pkJhy64989d21'vi} applied
to $\varphi=\|\cdot\|$ and 
\begin{equation}
\label{ekKjw53N8Uy2-11b}
\phi\colon\RR\to\RX\colon t\mapsto
\begin{cases}
-\ln\big(1-|t|^p\big),
&\text{if}\;\:|t|<1;\\
\pinf,&\text{if}\;\:|t|\geq 1
\end{cases}
\end{equation}
that $g=[\phi\circ\varphi]^\sim\in\Gamma_0(\RR\oplus\GG)$.
For $\GG=\RR^{N}$ and $p=2$, \eqref{ekKjw53N8Uy2-04c} is closely 
related to a standard barrier for the Lorentz cone 
$\menge{(y,\eta)\in\RR^{N+1}}{\|y\|\leq\eta}$
\cite[Proposition~5.4.3]{Nest94}.
\end{example}

Proposition~\ref{p:5th} is an effective device for constructing 
a lower semicontinuous convex function in $\Gamma_0(\HH)$ 
by composing a perspective function $\widetilde{\varphi}$,
for some $\varphi\in\Gamma_0(\GG)$, with a 
continuous affine operator $A\colon\HH\to\RR\oplus\GG$ and,
possibly, a suitable convexity preserving operation
(see also Proposition~\ref{pjkwrf78Fgs02}).
For instance, the generalized TREX estimator of \cite{Scda16b} 
hinges on a special case of the following example in 
Euclidean spaces.
\begin{example}
\label{ex:5th}
Let $L\colon\HH\to\GG$ be linear and bounded, let $|||\cdot|||$ be
a norm on $\GG$ such that, for some $\chi\in\RPP$,
$|||\cdot|||\geq\chi\|\cdot\|$, let $r\in\GG$, let $u\in\HH$, 
let $\rho\in\RR$, let $q\in\left]1,\pinf\right[$, and let 
$s\in\left[1,\pinf\right[$. Set
\begin{equation}
\label{eIidfwetf548b6-24t}
h\colon\HH\to\RX\colon x\mapsto
\begin{cases}
\Frac{|||Lx-r|||^{qs}}{|\scal{x}{u}-\rho|^{(q-1)s}},
&\text{if}\;\:\scal{x}{u}>\rho;\\[3mm]
0,&\text{if}\;\:Lx=r\;\;\text{and}\;\;\scal{x}{u}=\rho;\\
\pinf,&\text{otherwise.}
\end{cases}
\end{equation}
Then $h\in\Gamma_0(\HH)$. 
\end{example}
\begin{proof}
Set $\varphi=|||\cdot|||^q$. Then $\dom\varphi=\GG$. In addition, 
$\varphi(y)/\|y\|\geq\chi^q\|y\|^q/\|y\|\to\pinf$ as 
$\|y\|\to\pinf$ and therefore \eqref{ebcBdq739jU9-23a} implies that
$\rec\varphi=\iota_{\{0\}}$. Thus, \eqref{e:perspective2} becomes
\begin{equation}
\label{eIidfwetf548b6-24m}
f\colon\HH\to\RX\colon x\mapsto
\begin{cases}
\dfrac{|||Lx-r|||^q}{|\scal{x}{u}-\rho|^{q-1}},
&\text{if}\;\:\scal{x}{u}>\rho;\\[3mm]
0,&\text{if}\;\:Lx=r\;\;\text{and}\;\;\scal{x}{u}=\rho;\\
\pinf,&\text{otherwise,}
\end{cases}
\end{equation}
and Proposition~\ref{p:5th} asserts that $f\in\Gamma_0(\HH)$.
Now let $\phi=|\cdot|^s$ and set $\phi(\pinf)=\pinf$. Then 
$\phi$ is increasing on
$\RPX=\ran f$, continuous, and convex. Hence it follows 
from \cite[Proposition~II.8.4]{Choq64} and 
\cite[Proposition~8.19]{Livre1} that
$h=\phi\circ f\in\Gamma_0(\HH)$.
\end{proof}

\begin{example}
\label{ex:12th}
Let $(\Omega,{\EuScript F},\PP)$ be a probability space and
let $\HH=L^2(\Omega,{\EuScript F},\PP)$ be the associated
Hilbert space of square-integrable random variables.
Let $\varphi\in\Gamma_0(\HH)$ and set 
\begin{equation}
\label{e:perspective12}
f\colon\HH\to\RX\colon X\mapsto
\begin{cases}
\EE X\varphi
\bigg(\Frac{X}{\EE X}\bigg),
&\text{if}\;\:\EE X>0;\\
(\rec\varphi)(X),
&\text{if}\;\:\EE X=0;\\
\pinf,&\text{if}\;\:\EE X<0.
\end{cases}
\end{equation}
Then $f\in\Gamma_0(\HH)$. 
\end{example}
\begin{proof}
This is an application of Proposition~\ref{p:5th} with 
$\GG=\HH$, $L=\Id$, $\mu=\PP$, $u=1$ a.s., $r=0$ a.s., $z=0$ a.s.,
and $\rho=0$.
\end{proof}

\section{Integral functions}
\label{sec:5}

In this section we construct lower semicontinuous functions by
using as an integrand a perspective function. 
First, let us extend and formalize the divergence model
\eqref{ekJhy64989d21b}.

\begin{proposition}
\label{pjkwrf78Fgs04}
Let $(\Omega,{\EuScript F},\mu)$ be a measure space, let
${\mathsf G}$ be a separable real Hilbert space, and let 
$\varphi\in\Gamma_0(\mathsf{G})$. Set
$\HH=L^2((\Omega,{\EuScript F},\mu);\RR)$ and
$\GG=L^2((\Omega,{\EuScript F},\mu);\mathsf{G})$, and suppose
that one of the following holds:
\begin{enumerate}
\item
\label{pjkwrf78Fgs04i}
$\mu(\Omega)<\pinf$.
\item
\label{pjkwrf78Fgs04ii}
$\varphi\geq\varphi(0)=0$. 
\end{enumerate}
For every $x\in\HH$, set 
$\Omega_0(x)=\menge{\omega\in\Omega}{x(\omega)=0}$ and
$\Omega_+(x)=\menge{\omega\in\Omega}{x(\omega)>0}$.
Define 
\begin{multline}
\label{ejkwrf78Fgs02b}
\Phi\colon\HH\oplus\GG\to\RX\colon (x,y)\mapsto\\
\begin{cases}
\displaystyle{\int_{\Omega_0(x)}}
\big(\rec\varphi\big)\big(y(\omega)\big)\mu(d\omega)\!\!\!&+
\displaystyle{\int_{\Omega_+(x)}}x(\omega)
\varphi\bigg(\dfrac{y(\omega)}{x(\omega)}\bigg)\mu(d\omega),\\[5mm]
&\text{if}\;\;
\begin{cases}
x\geq 0\;\:\text{a.e.}\\
(\rec\varphi)(y)1_{\Omega_0(x)}+x\varphi(y/x)1_{\Omega_+(x)}\in 
L^1\big((\Omega,\mathcal{F},\mu);\RR\big);\\[3mm]
\end{cases}\\
\pinf,&\text{otherwise.}
\end{cases}
\end{multline}
Then $\Phi\in\Gamma_0(\HH\oplus\GG)$.
\end{proposition}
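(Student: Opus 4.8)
The plan is to realize $\Phi$ as the integral functional associated with the perspective $\widetilde{\varphi}$ and invoke the standard normal‑integrand machinery for integral functionals on $L^2$ spaces. First I would observe that, by Proposition~\ref{pkJhy64989d21}\ref{pkJhy64989d21ii}, $\widetilde{\varphi}\in\Gamma_0(\RR\oplus\mathsf G)$, and that the integrand appearing in \eqref{ejkwrf78Fgs02b} is precisely $(\xi,v)\mapsto\widetilde{\varphi}(\xi,v)$ evaluated pointwise at $(x(\omega),y(\omega))$: indeed when $x(\omega)>0$ this is $x(\omega)\varphi(y(\omega)/x(\omega))$, when $x(\omega)=0$ it is $(\rec\varphi)(y(\omega))$, and when $x(\omega)<0$ it is $\pinf$, which matches \eqref{ekJhy64989d08l} and also encodes the constraint $x\geq 0$ a.e. So, writing $\vartheta=\widetilde{\varphi}$, one has
\begin{equation*}
\Phi(x,y)=\int_\Omega\vartheta\big(x(\omega),y(\omega)\big)\mu(d\omega)
\end{equation*}
whenever the integrand lies in $L^1$, and $\Phi(x,y)=\pinf$ otherwise, which is exactly the usual definition of an integral functional once one checks the integrand is never $\minf$ (true, since $\vartheta$ is proper convex on a Hilbert space, hence bounded below by an affine function, but more simply $\vartheta>\minf$ everywhere).

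Next I would verify the two hypotheses needed for the integral functional of a $\Gamma_0$ integrand to be in $\Gamma_0$: measurability of the integrand (it is a \emph{normal integrand} because $\vartheta$ is a fixed lower semicontinuous convex function on the separable space $\RR\oplus\mathsf G$, so $(\omega,\xi,v)\mapsto\vartheta(\xi,v)$ is trivially $\mathcal F\otimes\mathcal B$‑measurable and lower semicontinuous in $(\xi,v)$), and properness of $\Phi$. Convexity and lower semicontinuity of $\Phi$ then follow from the standard result on integral functionals (e.g.\ \cite[Proposition~9.40 or Proposition~8.16]{Livre1} in the hilbertian setting, building on Rockafellar's theory): convexity is immediate from pointwise convexity of $\vartheta$ and linearity of the integral together with the $L^1$ convention, and lower semicontinuity follows from Fatou's lemma along a minorizing affine function, using that $L^2$ convergence yields a.e.\ convergent subsequences.

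The one genuine point is \textbf{properness}, and this is where the two alternative hypotheses \ref{pjkwrf78Fgs04i} and \ref{pjkwrf78Fgs04ii} enter. I must exhibit $(x,y)\in\HH\oplus\GG$ with $\Phi(x,y)<\pinf$. Pick $z\in\dom\varphi\subset\mathsf G$ and a subdifferential‑type affine minorant $\varphi\geq\scal{\cdot}{u}-\beta$ with $u\in\dom\varphi^*$, so that $\widetilde{\varphi}(\xi,v)\geq\scal{v}{u}-\beta\xi$ for $\xi\geq0$, whence the integrand is bounded below by an $L^1$ function on $\Omega$ as soon as $x\in L^1$; since $x\in L^2$, under \ref{pjkwrf78Fgs04i} ($\mu(\Omega)<\pinf$) this is automatic, and the candidate $x\equiv1$, $y\equiv z$ gives $\Phi(x,y)=\mu(\Omega)\varphi(z)<\pinf$. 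Under \ref{pjkwrf78Fgs04ii}, $\varphi\geq0$ forces $\widetilde{\varphi}\geq0$ (including $(\rec\varphi)\geq\rec 0=0$ via Lemma~\ref{l:pjlaurent}\ref{l:pjlaurentii} or directly from \eqref{ebcBdq739jU9-23a}), so no integrability obstruction from below arises, and the choice $x\equiv0$, $y\equiv0$ gives $\Phi(0,0)=\int_\Omega\varphi(0)\,d\mu=0<\pinf$. In both cases $\dom\Phi\neq\emp$; combined with lower semicontinuity, convexity, and the fact that $\Phi>\minf$ (from the affine minorant together with $x\in L^1$ locally, or simply from $\vartheta>\minf$ and the $L^1$ convention which never produces $\minf$), we conclude $\Phi\in\Gamma_0(\HH\oplus\GG)$.

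I expect the only delicate bookkeeping to be the interplay, under hypothesis \ref{pjkwrf78Fgs04i}, between the $L^1$ constraint built into \eqref{ejkwrf78Fgs02b} and the affine‑minorant argument — one must be careful that the integrand, though possibly taking the value $\pinf$ on a set of positive measure, is still bounded \emph{below} by an integrable function so that the integral is well defined in $\RX$ and the definition of $\Phi$ via the $L^1$ case versus the $\pinf$ case is coherent. This is routine but is the heart of why the hypotheses \ref{pjkwrf78Fgs04i}/\ref{pjkwrf78Fgs04ii} are imposed.
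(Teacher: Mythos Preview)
Your proposal is correct and follows essentially the same route as the paper: recognize that $\Phi(x,y)=\int_\Omega\widetilde{\varphi}(x(\omega),y(\omega))\,\mu(d\omega)$ and then invoke a standard result on integral functionals with $\Gamma_0$ integrands (the paper cites \cite[Proposition~9.32]{Livre1} directly). The paper's proof is terse and does not spell out how hypotheses \ref{pjkwrf78Fgs04i}/\ref{pjkwrf78Fgs04ii} enter; your explicit properness argument (constant pair $(1,z)$ under \ref{pjkwrf78Fgs04i}, and $(0,0)$ under \ref{pjkwrf78Fgs04ii}) is exactly what is needed to justify that citation, with the minor correction that $\Phi(0,0)=\int_\Omega(\rec\varphi)(0)\,d\mu=0$ rather than $\int_\Omega\varphi(0)\,d\mu$ (same value here, but the integrand at $\eta=0$ is $\rec\varphi$).
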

\begin{proof}
It follows from 
Proposition~\ref{pkJhy64989d21}\ref{pkJhy64989d21ii} 
that $\widetilde{\varphi}\in\Gamma_0(\RR\oplus\mathsf{G})$.
Furthermore, we derive from \eqref{ekJhy64989d08l} and 
\eqref{ejkwrf78Fgs02b} that
\begin{equation}
\label{e:245hg2}
(\forall x\in\HH)(\forall y\in\GG)\quad
\Phi(x,y)=\int_{\Omega}\widetilde{\varphi}
\big(x(\omega),y(\omega)\big)\mu(d\omega).
\end{equation}
In turn, \cite[Proposition~9.32]{Livre1} yields
$\Phi\in\Gamma_0(\HH\oplus\GG)$. 
\end{proof}

\begin{proposition}
\label{pjkwrf78Fgs02}
Let $\Omega$ be a nonempty open subset of $\RR^N$ and let $\HH$ be
the Sobolev space $H^1(\Omega)$, i.e.,
$\HH=\menge{x\in L^2(\Omega)}{\nabla x\in(L^2(\Omega))^N}$.
For every $x\in\HH$, set 
$\Omega_-(x)=\menge{t\in\Omega}{x(t)<0}$,
$\Omega_0(x)=\menge{t\in\Omega}{x(t)=0}$, and
$\Omega_+(x)=\menge{t\in\Omega}{x(t)>0}$.
Let $\varphi\in\Gamma_0(\RR^N)$ be such that
$\varphi\geq\varphi(0)=0$, and define
\begin{equation}
\label{ejkwrf78Fgs04}
\begin{array}{rcl}
f\colon\HH&\to&\RX\\[3mm]
x&\mapsto&
\begin{cases}
\displaystyle{\int_{\Omega_0(x)}}
\big(\rec\varphi\big)\big(\nabla x(t)\big)dt+
\displaystyle{\int_{\Omega_+(x)}}x(t)
\varphi\bigg(\dfrac{\nabla x(t)}{x(t)}\bigg)dt,
&\text{if}\;\:x\geq 0\;\:\text{a.e.};\\[3mm]
\pinf,&\text{otherwise.}
\end{cases}
\end{array}
\end{equation}
Then $f\in\Gamma_0(\HH)$. 
\end{proposition}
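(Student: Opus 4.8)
The plan is to recognize $f$ as the composition of the integral functional of Proposition~\ref{pjkwrf78Fgs04} with the bounded linear operator $D\colon x\mapsto(x,\nabla x)$, and then to use that a composition of a function in $\Gamma_0$ with a continuous linear operator is again in $\Gamma_0$ as soon as it is proper (lower semicontinuity and convexity being automatic).

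First I would apply Proposition~\ref{pjkwrf78Fgs04} with $\mathsf{G}=\RR^N$ and $\mu$ the Lebesgue measure on $\Omega$. Since $\Omega$ need not have finite measure, I would invoke hypothesis~\ref{pjkwrf78Fgs04ii}, which holds because $\varphi\geq\varphi(0)=0$; this produces the functional $\Phi$ of \eqref{ejkwrf78Fgs02b}, lying in $\Gamma_0\big(L^2(\Omega)\oplus(L^2(\Omega))^N\big)$ and satisfying, by \eqref{e:245hg2}, $\Phi(u,v)=\int_\Omega\widetilde{\varphi}(u(t),v(t))\,dt$ for every $(u,v)$. Next I would introduce the operator $D\colon\HH\to L^2(\Omega)\oplus(L^2(\Omega))^N\colon x\mapsto(x,\nabla x)$; by the definition of $\HH=H^1(\Omega)$ it is well defined and linear, and $\|Dx\|^2=\|x\|_{L^2(\Omega)}^2+\|\nabla x\|_{(L^2(\Omega))^N}^2=\|x\|_{H^1(\Omega)}^2$, so it is bounded.

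The core step is the identity $f=\Phi\circ D$. To prove it I would fix $x\in\HH$ and exploit that $\widetilde{\varphi}$, being the lower semicontinuous envelope of the nonnegative function $\mathscr{P}_{\varphi}$, is itself nonnegative, so that every integral below is a well-defined element of $\RPX$ and no integrability bookkeeping is needed. If $\Omega_-(x)$ has positive measure, then \eqref{ekJhy64989d08l} gives $\widetilde{\varphi}(x(t),\nabla x(t))=\pinf$ for $t\in\Omega_-(x)$, whence $(\Phi\circ D)(x)=\pinf=f(x)$. If instead $x\geq 0$ a.e., then $\Omega$ coincides with $\Omega_0(x)\cup\Omega_+(x)$ up to a $\mu$-null set, and on these two pieces \eqref{ekJhy64989d08l} identifies $\widetilde{\varphi}(x(t),\nabla x(t))$ with $(\rec\varphi)(\nabla x(t))$ and with $x(t)\varphi(\nabla x(t)/x(t))$, respectively; since these integrands are nonnegative, $\int_\Omega\widetilde{\varphi}(x(t),\nabla x(t))\,dt$ splits exactly into the two integrals defining $f(x)$ in \eqref{ejkwrf78Fgs04} (and if that common value is $\pinf$ the two sides still agree). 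Hence $f=\Phi\circ D$.

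To finish, I would note that $\Phi\circ D$ is convex and lower semicontinuous (composition of the convex lower semicontinuous $\Phi$ with the continuous linear $D$), and that it is proper because $D0=(0,0)$ and $(\Phi\circ D)(0)=\int_\Omega\widetilde{\varphi}(0,0)\,dt=0$, since $\widetilde{\varphi}(0,0)=(\rec\varphi)(0)=0$; thus $0\in\dom f$ with $f(0)=0$, and therefore $f=\Phi\circ D\in\Gamma_0(\HH)$. I expect the only genuinely delicate point to be the identity $f=\Phi\circ D$: the ``otherwise'' clause in \eqref{ejkwrf78Fgs04} tacitly presupposes that the displayed integrals are meaningful, and verifying that $(\Phi\circ D)(x)=\pinf$ exactly in that situation is where the hypothesis $\varphi\geq 0$ (hence $\widetilde{\varphi}\geq 0$) is really used. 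As a side remark, a standard property of Sobolev functions (Stampacchia's lemma) shows that $\nabla x=0$ a.e.\ on $\Omega_0(x)$, so the first integral in \eqref{ejkwrf78Fgs04} in fact vanishes; this observation is not needed for the argument.
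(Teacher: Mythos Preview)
Your proposal is correct and follows exactly the paper's strategy: invoke Proposition~\ref{pjkwrf78Fgs04}\ref{pjkwrf78Fgs04ii} with $\mathsf{G}=\RR^N$ and Lebesgue measure to obtain $\Phi\in\Gamma_0\big(L^2(\Omega)\oplus(L^2(\Omega))^N\big)$, compose with the bounded linear operator $x\mapsto(x,\nabla x)$, and check properness via $f(0)=0$. You simply spell out the identity $f=\Phi\circ D$ and the role of $\widetilde{\varphi}\geq 0$ more carefully than the paper does, and your side remark on Stampacchia's lemma is a nice addition but, as you note, not needed.
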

\begin{proof}
Set $\GG=(L^2(\Omega))^N$ and $\mathsf{G}=\RR^N$, define 
$\Phi$ as in \eqref{ejkwrf78Fgs02b}, where 
$(\Omega,\mathcal{F},\mu)$ is 
the standard Lebesgue measure space, and let 
$L\colon\HH\to\HH\oplus\GG\colon x\mapsto(x,\nabla x)$. Then
$\Phi\in\Gamma_0(\HH\oplus\GG)$ by 
Proposition~\ref{pjkwrf78Fgs04}\ref{pjkwrf78Fgs04ii}. 
On the other hand, since $\nabla\colon\HH\to\GG$ is bounded, 
$L$ is linear and continuous. Since $f(0)=0$, we 
conclude that $f=\Phi\circ L\in\Gamma_0(\HH)$.
\end{proof}

The next examples recover two classical functions that have been
used extensively in statistics (Fisher information) and in image
recovery (total variation).  

\begin{example}
\label{ex:i9}
Consider the setting of Proposition~\ref{pjkwrf78Fgs02}.
\begin{enumerate}
\item
By choosing the supercoercive function $\varphi=\|\cdot\|_2^2$, 
we infer that the Fisher information 
\begin{equation}
\label{ejkwrf78Fgs04b}
\begin{array}{rcl}
f\colon H^1(\Omega)&\to&\RX\\[3mm]
x&\mapsto&
\begin{cases}
\displaystyle{\int_{\Omega_+(x)}}
\dfrac{\|\nabla x(t)\|_2^2}{x(t)}dt,
&\text{if}\;\:
\begin{cases}
x\geq 0\;\:\text{a.e.}\\
[\,x=0\;\Rightarrow\;\nabla x=0\,]\;\text{a.e.};
\end{cases}\\
\pinf,&\text{otherwise}
\end{cases}
\end{array}
\end{equation}
is in $\Gamma_0(H^1(\Omega))$. The convexity properties of 
\eqref{e:fisher} over the subspace of strictly positive
1-dimensional smooth densities were apparently first discussed in 
\cite{Cohe68}. The convexity and lower semicontinuity properties
of extensions of the Fisher information, such as those used in 
\cite{Lion95} for $N=1$ and based on $\varphi=|\cdot|^p$, with 
$p>1$, or on higher order derivatives, can be obtained analogously.
\item
By choosing the positively homogeneous function 
$\varphi=\|\cdot\|_2$, we infer that the total variation function
\begin{equation}
\label{eIidfwetf548b8-11t}
\begin{array}{rcl}
f\colon H^1(\Omega)&\to&\RX\\[3mm]
x&\mapsto&
\begin{cases}
\displaystyle{\int_{\Omega}}
\|\nabla x(t)\|_2dt,
&\text{if}\;\:x\geq 0\;\:\text{a.e.};\\[3mm]
\pinf,&\text{otherwise}
\end{cases}
\end{array}
\end{equation}
is in $\Gamma_0(H^1(\Omega))$.
\end{enumerate}
\end{example}

We can also derive from Proposition~\ref{pjkwrf78Fgs04}
lower semicontinuous versions of 
a variety of standard divergences in the continuous and discrete 
cases. In the former, the underlying measure space is the Lebesgue
measure space. The latter is illustrated below.

\begin{example}
\label{exIidfwetf548b1-08a}
\index{$\phi$-divergence}
\index{Csisz\'ar $\phi$-divergence}
Let $N$ be a strictly positive integer,
set $I=\{1,\ldots,N\}$, and let 
$\phi\in\Gamma_0(\RR)$. For every $x=(\xi_i)_{i\in I}\in\RR^N$ and 
every $y=(\eta_i)_{i\in I}\in\RR^N$, set
$I_-(x)=\menge{i\in I}{\xi_i<0}$,
$I_0(x)=\menge{i\in I}{\xi_i=0}$,
$I_+(x)=\menge{i\in I}{\xi_i>0}$, and
\begin{equation}
\label{eIidfwetf548b1-08c}
\Phi(x,y)=
\begin{cases}
\Sum_{i\in I_0(x)}(\rec\phi)(\eta_i)
+\Sum_{i\in I_+(x)}\xi_i\phi(\eta_i/\xi_i),
&\text{if}\;\:I_-(x)=\emp;\\ 
\pinf,&\text{if}\;\:I_-(x)\neq\emp.
\end{cases}
\end{equation}
Then $\Phi\in\Gamma_0(\RR^{2N})$.
Indeed, this is a special case of 
Proposition~\ref{pjkwrf78Fgs04}\ref{pjkwrf78Fgs04i}, 
where $\Omega=I$, $\mathcal{F}=2^I$,
$\mu$ is the counting measure (hence $\HH=\GG=\RR^N$), 
$\varphi=\phi$, and $\mathsf{G}=\RR$. For instance, consider
\begin{equation}
\label{exUoboO83bd7206z}
\phi\colon\RR\to\RX\colon t\mapsto
\begin{cases}
t\ln t,&\text{if}\;\:t>0;\\
0,&\text{if}\;\:t=0;\\
\pinf,&\text{if}\;\:t<0.
\end{cases}
\end{equation}
Then $\rec\phi=\iota_{\{0\}}$ and, if we set 
$J(x,y)=\menge{i\in I}{(\xi_i=0\;\text{and}\;\eta_i\neq 0)
\;\text{or}\;(\xi_i>0\;\text{and}\;\eta_i<0)}$, 
\begin{equation}
\Phi(x,y)=
\begin{cases}
\Sum_{i\in I_+(x)\cap I_+(y)}\eta_i\ln(\eta_i/\xi_i),
&\text{if}\;\:I_-(x)\cup J(x,y)=\emp;\\ 
\pinf,&\text{otherwise}
\end{cases}
\end{equation}
is the Kullback-Leibler divergence between $x$ and $y$.
This notion is central in statistics and in information theory.
Another noteworthy family of discrete divergences is obtained by
replacing \eqref{exUoboO83bd7206z} by
\begin{equation}
\label{exIidfwetf548b9-28z}
\phi\colon\RR\to\RX\colon t\mapsto
\begin{cases}
\big|t^{1/p}-1\big|^p,&\text{if}\;\:t\geq 0;\\
\pinf,&\text{if}\;\:t<0,
\end{cases}
\qquad\text{where}\quad p\in\left[1,\pinf\right[.
\end{equation}
In this case $\rec\phi=\sigma_{\left]\minf,1\right]}$ and,
if we set 
$J(x,y)=\menge{i\in I}{\xi_i\geq 0\;\text{and}\;\eta_i<0}$, 
\eqref{eIidfwetf548b1-08c} becomes
\begin{equation}
\Phi(x,y)=
\begin{cases}
\Sum_{i\in I_0(x)\cap I_+(y)}\eta_i+
\Sum_{i\in I_+(x)\smallsetminus I_-(y)}\big
|\eta_i^{1/p}-\xi_i^{1/p}\big|^p,
&\text{if}\;\:I_-(x)\cup J(x,y)=\emp;\\ 
\pinf,&\text{otherwise.}
\end{cases}
\end{equation}
We recover the Kolmogorov variational divergence for $p=1$ and 
the Hellinger divergence for $p=2$.
\end{example}

\paragraph{Acknowledgement.} The work of 
P. L. Combettes was partially supported by the 
CNRS MASTODONS project under grant 2016TABASCO.

\end{document}